\let\mathbb\mathds
\newtheorem{theorem}{Theorem}[section]
\newtheorem{lemma}[theorem]{Lemma}
\newtheorem{fact}{Fact}
\newtheorem{proposition}[theorem]{Proposition}
\theoremstyle{remark}
\theoremstyle{definition}
\newtheorem{definition}[theorem]{Definition}
\DeclareMathOperator{\re}{Re}
\def\del{\partial}              
\def\bC{\mathbb C}          
\def\bR{\mathbb R}          
\def\bZ{\mathbb Z}          
\def\bT{\mathbb T}          
\def\mC{\mathcal{C}}
\def\mL{\mathcal{L}}            
\def\mV{\mathcal{V}}
\def\mK{\mathcal{K}}
\def\sup{\mbox{Sup}}
\def\spot{\mbox{Spot}}
\def\bi{{\bf{ i}}}
\def\ps{s}
\def\pol{P}
\def\ra{\rightarrow}
\def\vol{d\varpi}
\def\zeta{{\rm A}}
\def\del{\partial}
\def\Lap{\Delta}
\def\hess{{\rm{ Hess\,}} }
\def\tr{{\rm{ Tr \,}} }
\def\lT{\lambda_1^{\bT}}
\newcommand{\newb}[1]{{\color{blue}#1}}
 \newcommand*{\quot}[2]%
{\ensuremath{%
   \raisebox{.35ex}{\ensuremath{#1}}\big/\raisebox{-.35ex}{\ensuremath{#2}}}}
\begin{document}
\title[]{Critical K\"ahler toric metrics for the invariant first eigenvalue}

\author{Rosa Sena-Dias}
\address{Rosa Sena-Dias\\Centro de An\'alise Matem\'atica, Geometria e Sistemas Din\^amicos\\ Departamento de Matem\'{a}tica, Instituto Superior T\'{e}cnico\\ Av. Rovisco Pais, 1049-001 Lisboa\\ Portugal}
\email{rsenadias@math.ist.utl.pt}
\thanks{This work was partially supported by FCT/Portugal through project PTDC/MAT-GEO/1608/2014}
\date{}

\begin{abstract}
In \cite{ls} it is shown that the first eigenvalue of the Laplacian restricted to the space of invariant functions on a toric K\"ahler manifold (i.e. $\lT$, the invariant first eigenvalue) is an unbounded function of the toric K\"ahler metric. In this note we show that, seen as a function on the space of toric K\"ahler metrics on a fixed toric manifold, $\lT$ admits no analytic critical points. We also show that on $S^2$, the first eigenvalue of the Laplacian restricted to the space of $S^1$-equivariant functions of any given integer weight admits no critical points.
\end{abstract}
\maketitle
\section{Introduction}
Let $(M,g)$ be a Riemannian manifold and let $\lambda_1$ denote the first eigenvalue of the Beltrami-Laplace operator on $M$. If we assume that $M$ is of dimension $2$ and has volume $1$ it is well known by a theorem of Yang-Yau that  $\lambda_1$ is a bounded function of the metric $g$ on $M$. One can ask if there is a Riemannian metric which achieves
$$
\sup\{\lambda_1(g)|\, g \, \text{is a Riemannian metric,} \, \text{vol}(g)=1\}.
$$
For $S^2$, this metric is known to be the Fubini-Study metric. In \cite{n}, Nadirashvili studies the same problem for $\bT^2$. He defines the notion of $\lambda_1$-critical metric which is roughly speaking a critical point for the function $\lambda_1(g)$. Note that $\lambda_1$ is not a differentiable function of $g$ in general so this definition requires some care. We will say more on this ahead. In higher dimensional Riemannian manifolds El Soufi-Ilias, generalising a result of Nadirashvili, prove the following characterisation of $\lambda_1$-critical metrics
\begin{theorem}[El Soufi-Ilias, Nadirashvili] A Riemannian metric $g$ on $M$ is critical for $\lambda_1$ iff $g$ admits a set of eigenfunction $\{f_a, a=0,\cdots, N\}$ for $\lambda_1(g)$ such that $F=(f_0,\cdots,f_N)$ embeds $M$ into $S^N$, with $g=F^*g_{FS}$ and $F(M)$ minimal in $S^N$.
\end{theorem}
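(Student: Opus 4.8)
The plan is to follow the strategy of Nadirashvili and El Soufi--Ilias: reduce criticality to a finite-dimensional convexity statement about the first eigenspace via eigenvalue perturbation theory, and then identify that statement with the existence of an isometric minimal immersion through Takahashi's theorem.

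First I would fix the notion of criticality. Since $\lambda_1$ is only Lipschitz along smooth paths of metrics, one restricts attention to analytic deformations $g_t$ with $g_0=g$ and $\mathrm{vol}(g_t)$ constant (equivalently, one passes to the scale-invariant functional $g\mapsto\lambda_1(g)\mathrm{vol}(g)^{2/n}$), and declares $g$ \emph{critical} when $t\mapsto\lambda_1(g_t)$ does not increase to first order at $t=0$, i.e. $\frac{d^{+}}{dt}\big|_{0}\lambda_1(g_t)\le 0\le\frac{d^{-}}{dt}\big|_{0}\lambda_1(g_t)$ for every such deformation. The analytic input is the Hadamard-type variation formula: if $\lambda_1(g)$ is simple with unit eigenfunction $f$, then for $g_t=g+th+o(t)$ one has $\dot\lambda_1(0)=-\int_M\langle h,q_f\rangle_g\,dv_g$ with \emph{stress--energy tensor} $q_f=df\otimes df-\tfrac12(|df|_g^2-\lambda_1 f^2)g$; and if $\lambda_1(g)$ has multiplicity $m$ with eigenspace $E$, Rellich--Kato theory for the analytic family $\Delta_{g_t}$ shows that the eigenvalue branches issuing from $\lambda_1(g)$ have derivatives at $0$ equal to the eigenvalues of the quadratic form $Q_h(f):=-\int_M\langle h,q_f\rangle_g\,dv_g$ on $E$. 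Taking the minimum (resp. maximum) over the branches gives $\frac{d^{+}}{dt}\big|_{0}\lambda_1(g_t)=\min_{\|f\|=1}Q_h(f)$ and $\frac{d^{-}}{dt}\big|_{0}\lambda_1(g_t)=\max_{\|f\|=1}Q_h(f)$; verifying these two identities rigorously is the only step that is not purely formal, and it is where the care alluded to in the introduction enters.

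Given that, $g$ is critical iff for every volume-preserving symmetric $2$-tensor $h$ the form $Q_h$ on $E$ is neither positive nor negative definite; since $Q_{-h}=-Q_h$ these two conditions coincide. The volume-preserving constraint on $h$ is precisely $L^2$-orthogonality to $g$, and $\{q_f:f\in E,\ \|f\|=1\}$ is a compact subset of the finite-dimensional space $W:=\mathrm{span}\big(\{q_f:f\in E\}\cup\{g\}\big)$, so a Minkowski--Hahn--Banach separation argument in $W$ shows that criticality is equivalent to $0\in\mathrm{conv}\{q_f:f\in E,\ \|f\|=1\}+\bR g$. Unravelling, this means there are first eigenfunctions $\phi_1,\dots,\phi_k$ and a constant $\nu$ with $\sum_i q_{\phi_i}=\nu g$; that is, $\sum_i d\phi_i\otimes d\phi_i$ is everywhere a scalar multiple of $g$. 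A short computation with the Bochner identity $\Delta\big(\sum_i\phi_i^2\big)=2\lambda_1\sum_i\phi_i^2-2\sum_i|\nabla\phi_i|^2$ then forces $u:=\sum_i\phi_i^2$ to be constant, hence $\sum_i|\nabla\phi_i|^2=\lambda_1 u$ is constant and $\sum_i d\phi_i\otimes d\phi_i=\frac{\lambda_1 u}{n}g$. After normalizing $g$ homothetically so that $\lambda_1=n$ and rescaling the $\phi_i$ by a common factor so that $\sum_i\phi_i^2=1$, we obtain $\sum_i d\phi_i\otimes d\phi_i=g$, so $F=(\phi_1,\dots,\phi_k)$ is an isometric immersion of $M$ into the unit sphere $S^{k-1}$ with $\Delta_g F=nF$; by Takahashi's theorem $F(M)$ is minimal in $S^{k-1}$. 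For the converse one runs the computation backwards: from $\sum_a f_a^2=1$ one gets $\sum_a df_a\otimes df_a=F^*g_{FS}=g$, from minimality and Takahashi one gets $\Delta_g f_a=nf_a$, and hence $\sum_a q_{f_a}=g$; consequently, for any volume-preserving $h$, summing $Q_h$ over the positively weighted family $\{f_a\}$ yields $-\int_M\mathrm{tr}_g h\,dv_g=0$, so $Q_h$ cannot be definite and $g$ is critical.

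The main obstacle, as noted, is the perturbation-theoretic step: controlling the one-sided derivatives of $\lambda_1(g_t)$ along arbitrary deformations by the finite-dimensional form $Q_h$, and checking that it suffices to test analytic deformations; the separation argument and the Bochner computation are then routine. A secondary, more cosmetic point is that the construction naturally delivers an isometric minimal immersion and the literal equality $g=F^{*}g_{FS}$ only after the homothetic normalization; upgrading "immersion" to "embedding" and fixing the precise normalization follow the conventions of El Soufi--Ilias.
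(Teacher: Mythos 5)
This theorem is quoted in the paper as background, attributed to Nadirashvili and El Soufi--Ilias, and the paper gives no proof of it, so there is no in-paper argument to compare against line by line; the closest analogue is the paper's own Proposition~\ref{characterisation_crit}, which adapts exactly the strategy you describe (one-sided derivatives of the eigenvalue branches along analytic deformations, then a Hahn--Banach separation argument placing $0$ in the convex hull of the relevant quadratic expressions) to the toric setting. Your sketch is the standard and correct route: the first-variation formula with $q_f=df\otimes df-\tfrac12(|df|^2-\lambda_1 f^2)g$ is right, the reduction of criticality to indefiniteness of $Q_h$ on the finite-dimensional eigenspace is the same Rellich--Kato step the paper also takes for granted, and the separation argument correctly yields $\sum_i\alpha_i q_{\phi_i}=\nu g$ with $\nu$ a genuine constant (since the span of $g$ in the $L^2$ space of symmetric $2$-tensors is one-dimensional). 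Two points deserve more care than your sketch gives them: the Bochner identity alone does not immediately force $u=\sum_i\phi_i^2$ to be constant --- one must also take the trace of $\sum_i d\phi_i\otimes d\phi_i=\psi\,g$ and treat $n=2$ and $n>2$ separately (for $n>2$ one gets $\Delta u=au+b$ with $a<0$, whence $u$ is constant by invertibility of $\Delta-a$); and the argument produces a minimal isometric \emph{immersion} by first eigenfunctions, not an embedding, so the word ``embeds'' in the quoted statement is a looseness of the paper rather than something your normalization conventions can supply. With those caveats, your proposal is a faithful account of the cited proof and is consistent with the paper's methodology.
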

Therefore $\lambda_1$-critical metrics yield minimal  submanifolds of spheres.

We are interested in the more symmetric case when $(M,g)$ admits an isometric group action by a group  $G$. In \cite{cde}, Colbois-Dryden-El Soufi introduce the notion of $\lambda_1^G$-critical invariant metrics where $\lambda_1^G$ is the smallest positive eigenvalue of the Laplacian restricted to $G$-invariant eigenfunctions. Again this notion is subtle as $\lambda_1^G$ is not in general a differentiable function the invariant metric but it is analogous to the notion introduced by Nadirashvili. They prove the following theorem
\begin{theorem}[Colbois-Dryden-El Soufi] If $G$ has dimension greater than $1$ then $M$ admits no $G$-invariant metric which is critical for $\lambda_1^G$.
\end{theorem}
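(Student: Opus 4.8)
The plan is to upgrade the El Soufi--Ilias characterisation (Theorem~1.1 above) to the $G$-equivariant setting and then extract a contradiction from the geometry of the orbits. First I would redo the Nadirashvili--El Soufi--Ilias first-variation analysis, but \emph{inside} the space $\mathcal{M}_G$ of $G$-invariant Riemannian metrics of volume $1$. Since $\lambda_1^G$ is only Lipschitz (not $C^1$) in the metric, ``critical'' must be taken in Nadirashvili's sense: along every smooth path $g_t$ in $\mathcal{M}_G$ with $g_0=g$, $\dot g_0=h$, the one-sided derivatives of $t\mapsto\lambda_1^G(g_t)$ at $0$ have opposite signs, so $\lambda_1^G$ cannot be increased to first order in either direction. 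The standard computation identifies these one-sided derivatives with the extrema, over the finite-dimensional $\lambda_1^G(g)$-eigenspace $E\subset C^\infty(M)^G$, of the linear-in-$h$ functional
\[
h\;\longmapsto\; -\int_M\Big\langle h,\; df\otimes df-\tfrac1n|df|^2\,g\Big\rangle\,dv_g ,\qquad f\in E
\]
(after the usual volume normalisation, $n=\dim M$). Criticality is then equivalent, by a Hahn--Banach separation exactly as in El Soufi--Ilias but now against $G$-invariant test tensors, to the statement that $0$ belongs to the convex hull of $\{\,df\otimes df-\tfrac1n|df|^2g : f\in E\,\}$; since each of these tensors is itself $G$-invariant and they span a finite-dimensional subspace, this is equivalent to the pointwise identity
\[
\sum_{i=1}^{k} df_i\otimes df_i \;=\; c\,g,\qquad c>0,
\]
for some finite family $f_1,\dots,f_k$ of $G$-invariant $\lambda_1^G(g)$-eigenfunctions (with $\sum_i f_i^2$ constant as well). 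This is the $G$-equivariant analogue of Theorem~1.1.

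Now the obstruction. Each $f_i$ is constant along the $G$-orbits, hence $df_i$ annihilates every vector tangent to an orbit; equivalently $\nabla f_i$ is everywhere orthogonal to the orbit foliation. Since $\dim G>1$ and the action is effective, the principal orbits are positive-dimensional, so at a point $p$ of the (open dense) principal stratum there is a Killing field $X$ generated by the $G$-action with $X_p\neq 0$. Evaluating the displayed identity at $p$ on $X_p$ gives
\[
0\;=\;\sum_{i=1}^{k}\big(df_i(X_p)\big)^2\;=\;c\,g(X_p,X_p)\;=\;c\,|X_p|_g^2\;>\;0,
\]
which is absurd. Equivalently: the identity forces $F=(f_1,\dots,f_k)$ to be an immersion of $M$ into a round sphere, but $F$ is constant along the positive-dimensional orbits, so $dF_p$ has a nontrivial kernel. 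Either way, no $G$-invariant metric on $M$ can be $\lambda_1^G$-critical.

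The substance of the argument is entirely in the first step; the second is immediate once the identity is in hand. The hard part will be running the El Soufi--Ilias convex-duality argument in the \emph{constrained} variational problem --- in particular, checking that the ``balancing'' tensor it produces is genuinely $G$-invariant, so that $L^2$-orthogonality to \emph{all} $G$-invariant symmetric $2$-tensors forces it to vanish pointwise and not merely in average. A secondary technical nuisance is the singular stratification of the $G$-action (fixed points, lower-dimensional orbits): one should first establish the identity on the open dense principal stratum, where the eigenfunctions are smooth and the variational formulae are transparent, and then extend it by continuity, reading the Killing-field computation above on that stratum.
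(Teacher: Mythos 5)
The paper itself offers no proof of this statement: it is quoted as background from Colbois--Dryden--El Soufi \cite{cde}, so there is no internal argument to compare yours against. On its own terms your plan is the standard route and is essentially sound; it is also the same two-step pattern the paper uses for its own results (the equivariant Hahn--Banach characterisation of criticality in Proposition \ref{characterisation_crit}, followed by a pointwise contradiction in Section \ref{proof}), transplanted from the toric setting (scalar perturbations $\delta\ps$ of the symplectic potential) to the Riemannian setting (arbitrary $G$-invariant symmetric $2$-tensor perturbations $h$).

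Three remarks. First, your displayed first-variation integrand is not quite right: along $g_t=g+th$ with an $\mL^2$-normalised eigenfunction $f$ one gets
\begin{equation*}
\frac{d\Lambda_t}{dt}\Big|_{t=0}=-\int_M\Big\langle h,\; df\otimes df-\tfrac12\big(|df|^2-\lambda f^2\big)g\Big\rangle\,dv_g
\end{equation*}
(plus the term coming from the volume normalisation), not the trace-free tensor $df\otimes df-\tfrac1n|df|^2g$. This slip is harmless because the characterisation you actually invoke, $\sum_i df_i\otimes df_i=c\,g$ with $\sum_i f_i^2$ constant and $c>0$, is the correct El Soufi--Ilias conclusion; and even the weaker consequence $\sum_i df_i\otimes df_i=\varphi\,g$ with $\varphi\ge0$ a function suffices: evaluating on a Killing field at a principal point forces $\varphi=0$ there, hence all $df_i=0$ on the dense principal stratum, hence the $f_i$ are constant, hence zero (as $\lambda_1^G>0$), contradicting the normalisation. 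Second, your ``secondary technical nuisance'' about the singular strata is unnecessary: the balancing tensor produced by the separation argument is automatically $G$-invariant (each $f_i$ and $g$ are), so $\mL^2$-orthogonality to all invariant test tensors lets you test it against itself and gives pointwise vanishing on all of $M$; the eigenfunctions are smooth everywhere, so no continuity extension is needed, and on a closed manifold there is not even the boundary cut-off issue the paper has to handle near $\del\pol$. Third, your argument only uses $\dim G\ge1$ (positive-dimensional principal orbits for an effective action), so it proves a stronger statement than the one quoted, which is consistent with the way the result is used later in the paper. The real content, as you say, is the constrained convexity/separation step; the paper's proof of Proposition \ref{characterisation_crit} (Hahn--Banach in $\mL^2$, Riesz representation, compactness of the unit sphere of the finite-dimensional eigenspace) is exactly the template for carrying it out, and it goes through verbatim with invariant $2$-tensors in place of $\delta\ps$.
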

Given a group character $\chi$ it is easy to generalize the above notions to the setting of $\chi$-equivariant functions. These are functions $f:M\ra \bC$ that satisfy $f(h\cdot x)=\chi(h)f(x)$, for all $x\in M$, $h\in G$. We have a notion of equivariant first eigenvalue $\lambda_1^\chi$ and $\lambda_1^\chi$-critical metric.

More specifically we are interested in the case of toric manifolds. These are symplectic manifolds $(M^{2n},\omega)$ admitting a Hamiltonian $\bT^n$-action. Symplectic toric manifolds always admit a large family of compatible integrable $\bT^n$-invariant complex structures thus they carry several K\"ahler structures (see \cite{g} , \cite{a}). In fact for a fixed $\omega$, toric K\"ahler structures in the class $[\omega]$ are very well understood and are parametrised by a subset of the set of continuous functions on the moment polytope of $(M,\omega,\bT^n)$ which we denote by $\spot(M,\omega,\bT^n)$ and which we will describe carefully in the next section. We want to think of $\lT$  as a function on $\spot$. That is, we want to consider only toric {\it K\"ahler} metrics in the class $[\omega]$. Because we are not considering all $\bT^n$ invariant functions the results in \cite{cde} do not apply to our setting (except in dimension $2$). 

There has recently been an interest in considering spectral problems in the realm of K\"ahler geometry. In \cite{ajk} the authors define $\lambda_1$-extremal K\"ahler metric on a K\"ahler manifold as being those which are critical for $\lambda_1$ restricted to the space of K\"ahler metrics in a given class.

We will define an analogous notion of criticality in our setting. More specifically given a toric K\"ahler manifold we are looking for torus invariant K\"ahler metrics which are critical for $\lT$. In this note our goal is prove the following theorems
\begin{theorem}\label{alld}
Let $(M,\omega,g,\bT^n)$ be a toric K\"ahler manifold. Then, there are no analytic toric K\"ahler structures compatible with $\omega$ and in the class $[\omega]$ which are critical for $\lT$.
\end{theorem}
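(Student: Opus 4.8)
The plan is to carry the problem over to the moment polytope through the Guillemin--Abreu description of toric K\"ahler metrics, to write down the first variation of $\lT$ along a deformation of the symplectic potential, and then to exhibit a single deformation making $\lT$ strictly increase, which already contradicts the Nadirashvili--El Soufi--Ilias notion of criticality recalled in the introduction. First I would fix the toric K\"ahler structure with symplectic potential $u$ and recall that, in action--angle coordinates $(x,\theta)\in P^\circ\times\bT^n$ on $(M,\omega)$, one has $g=\sum u_{ij}\,dx^i dx^j+\sum u^{ij}\,d\theta^i d\theta^j$ with $(u^{ij})=(u_{ij})^{-1}$, the Riemannian volume is the fixed symplectic volume $d\varpi=dx$, and the Laplacian on $\bT^n$--invariant functions is $-\sum_{i,j}\del_i(u^{ij}\del_j\,\cdot\,)$ on $L^2(P,d\varpi)$. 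Thus $\lT(u)$ is the lowest positive eigenvalue of this operator, equivalently the minimum of $\int_P u^{ij}\del_i f\,\del_j f\,d\varpi\big/\int_P f^2\,d\varpi$ over $f$ with $\int_P f\,d\varpi=0$, and the tangent space to $\spot$ at $u$ is $C^\infty(\overline P)$, since adding a small smooth function keeps $u$ convex (one checks $\inf_{P^\circ}\lambda_{\min}(\operatorname{Hess}u)>0$ from the Delzant boundary behaviour) and preserves the Guillemin singularity.

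Next I would differentiate the eigenvalue equation along $u_t=u+t\dot u$. Using $\tfrac{d}{dt}u^{ij}=-u^{ik}\dot u_{kl}u^{lj}$ and that $d\varpi$ does not depend on $t$, one gets for a simple eigenvalue with normalized eigenfunction $f$
\[
\frac{d}{dt}\Big|_{0}\lT(u_t)=-\int_P\sum_{k,l}\dot u_{kl}\,w^k w^l\,d\varpi,\qquad w^k:=\sum_j u^{kj}\del_j f,
\]
the boundary terms of the integrations by parts vanishing because $u^{ij}$ degenerates on $\del P$; and when $\lT$ is a multiple eigenvalue with eigenspace $E$, the standard perturbation theory shows the one--sided derivatives of $t\mapsto\lT(u_t)$ at $0$ are the extreme eigenvalues of the quadratic form $Q_{\dot u}(f)=-\int_P\sum_{k,l}\dot u_{kl}\,w^k_f w^l_f\,d\varpi$ on $E$. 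Hence $u$ is $\lT$--critical exactly when, for every $\dot u\in C^\infty(\overline P)$, the form $Q_{\dot u}$ is neither positive nor negative definite on $E$.

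To contradict this I would test against $\dot u=-\tfrac12\sum_i x_i^2$, so that $\dot u_{kl}=-\delta_{kl}$ and $Q_{\dot u}(f)=\int_P\sum_k(w^k_f)^2\,d\varpi\ge 0$, with equality only when $w_f\equiv 0$, i.e.\ $f$ constant; since elements of $E$ are first eigenfunctions they are nonconstant, so $Q_{\dot u}$ is positive definite on $E$ and $\lT$ strictly increases to first order --- a contradiction. (Equivalently and without any appeal to eigenfunctions: $u_t=u-\tfrac t2|x|^2$ has $(u^{ij}_t)>(u^{ij})$ as positive--definite matrices for small $t>0$, so each Rayleigh quotient, hence its minimum $\lT$, strictly increases.) The $S^2$ statement in the abstract is the one--dimensional instance of the same computation carried out for $\chi$--equivariant eigenfunctions, where $E$ is one--dimensional.

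The step I expect to be the real obstacle is \emph{not} the choice of deformation but the rigorous setup of the framework of the second paragraph: that $t\mapsto\lT(u_t)$ is piecewise analytic along analytic deformations with the stated one--sided derivatives, that the invariant eigenfunctions and the fields $w_f$ are regular enough up to $\del P$ for $Q_{\dot u}$ to be exactly the right object and for no boundary contribution to be lost, and that the Colbois--Dryden--El Soufi / El Soufi--Ilias machinery genuinely transfers to the boundary--degenerate space $\spot$. This is presumably where working with \emph{analytic} $u$ helps: the invariant eigenfunctions are then real--analytic on $P^\circ$, the tensors $w_a\otimes w_b$ are analytic, and unique continuation forbids any nonzero positive--semidefinite combination $\sum_c\mu_c\,w_{g_c}\otimes w_{g_c}$ from being $L^2(P,d\varpi)$--orthogonal to every Hessian $\hess(\dot u)$ with $\dot u\in C^\infty(\overline P)$ --- precisely the obstruction that would have to hold if no admissible deformation made $Q_{\dot u}$ definite. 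Since the monotonicity remark above seems to bypass this, I would expect the author's argument to proceed more structurally through that same machinery, the analyticity entering for convenience (or to cover a subtlety in the notion of admissible deformation that I may be overlooking) rather than out of necessity.
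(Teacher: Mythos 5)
Your core argument is, as far as I can check it against the paper's own variational framework (Lemma 3.2 and the discussion preceding Proposition \ref{characterisation_crit}), correct, and it follows a genuinely different and much more elementary route than the paper. The paper never tests a specific deformation: it first proves the convex-hull characterisation of criticality (Proposition \ref{characterisation_crit}), reducing the problem to the pointwise identity $\sum_a\alpha_a\bigl(\lambda^2f_a^2-2\lambda\,\del f_a^t(\hess\ps)^{-1}\del f_a+\tr(D((\hess\ps)^{-1}\del f_a))^2\bigr)=0$, and then kills this identity by an order-of-vanishing induction at a vertex of $\pol$, concluding only because analyticity of the metric forces eigenfunctions vanishing to infinite order to vanish identically. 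You instead exhibit the single admissible direction $\delta\ps=-\tfrac12|x|^2$, for which $\hess(\delta\ps)=-\Id$ and hence $d\lT(f,\delta\ps)=\int_\pol\bigl|(\hess\ps)^{-1}\del f\bigr|^2dx>0$ for every nonzero $f\in E_1^{\bT}$ (no integration by parts, so no boundary condition on $\delta\ps$ is needed); since criticality requires this quantity to change sign on $E_1^{\bT}$ for every $\delta\ps$, no critical potential exists. If the details are supplied, your argument proves the theorem without the analyticity hypothesis, which is exactly the improvement the author says she would like; the paper's heavier machinery buys, in exchange, a method that also works in the equivariant setting (it is what drives Theorem \ref{s2}).

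Three points need attention. First, the load-bearing step you wave at ("one checks $\inf_{P^\circ}\lambda_{\min}(\hess u)>0$") is the admissibility of the path $\ps_t=\ps-\tfrac t2|x|^2$ in $\spot(\pol)$: you must show $\hess\ps\geq c\,\Id$ on all of $\pol$ \emph{and} the analogous uniform bound for the restriction of $\hess\ps$ to every face, so that all three conditions of Definition \ref{spot} survive for small $|t|$. This is true --- under Abreu's boundary conditions $(\hess\ps)^{-1}$ (the fibre part of the metric) extends continuously to $\bar\pol$, hence is bounded, and similarly on each face --- but it is precisely the point where the degenerate boundary behaviour could have bitten, so it deserves a proof rather than a remark. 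Second, your parenthetical "each Rayleigh quotient, hence its minimum $\lT$, strictly increases" only gives weak monotonicity of the infimum; strictness of the one-sided derivatives must come from the eigenfunction computation (which you do have), since the paper's definition of criticality with a strict inequality would otherwise be contradicted too cheaply, and the standard convention is the non-strict product $\le 0$. Third, the closing claim that Theorem \ref{s2} is "the one-dimensional instance of the same computation" is not right: for $k\neq0$ the variation acquires the extra term $+\int_\pol|f|^2\,k^t\hess(\delta\ps)\,k\,dx$, which for $\delta\ps=-\tfrac12|x|^2$ enters with the opposite sign, so this deformation does not decide the equivariant case; that is exactly why the paper's structural argument (constancy of $\sum_a\re(f_a\bar f_a')/\ps''$, etc.) is still needed there. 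Since the statement under review is Theorem \ref{alld} only, this last point does not affect your proof, but it should not be presented as covering the $S^2$ result.
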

Given $k\in \bZ$, $k$ corresponds to an $S^1$-character. We will prove the following
\begin{theorem}\label{s2}
Let $k$ be an integer. There are no $\lambda_1^k$ critical $S^1$-invariant metrics on $S^2.$
\end{theorem}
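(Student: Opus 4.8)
The plan is to reduce the problem to an explicit one-dimensional spectral analysis using the cohomogeneity-one structure of $S^1$-invariant metrics on $S^2$. Any $S^1$-invariant metric on $S^2$ can be written, away from the two poles, in the form $g = dr^2 + \phi(r)^2 d\theta^2$ for $r$ in an interval $[0,L]$ and $\theta\in\bR/2\pi\bZ$, where $\phi$ is smooth, positive on the interior, and satisfies the usual smoothness-at-the-poles conditions $\phi(0)=\phi(L)=0$, $\phi'(0)=1$, $\phi'(L)=-1$. A $\chi$-equivariant function of weight $k$ has the form $f(r,\theta)=u(r)e^{ik\theta}$, and the eigenvalue equation $\Delta f=\lambda f$ becomes the Sturm--Liouville problem $-(\phi u')' + (k^2/\phi)\,u = \lambda\,\phi\,u$ on $(0,L)$, with the appropriate self-adjoint boundary behaviour at the endpoints. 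Thus $\lambda_1^k(g)$ is the bottom of the spectrum of this ODE, and it is a simple eigenvalue (Sturm--Liouville theory): its first eigenfunction $u$ can be taken positive on $(0,L)$.

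Next I would set up the criticality condition following the El Soufi--Ilias / Nadirashvili framework. Since $\lambda_1^k$ is simple, it is in fact analytic along analytic deformations of $\phi$, so $g$ being critical means that the first variation of $\lambda_1^k$ vanishes for every admissible variation of $\phi$ (those preserving the volume normalization and the pole conditions). The Hadamard-type first variation formula gives, for a variation $\phi\mapsto\phi+t\psi$, something of the shape $\dot\lambda = \int_0^L \psi\,Q[u]\,dr$ where $Q[u]$ is a specific quadratic expression in $u$ and its derivative (built from $(u')^2$, $(k^2/\phi^2)u^2$, and $\lambda u^2$, weighted appropriately). Criticality then forces $Q[u]$ to be a constant multiple of the "gradient" of the volume constraint (a Lagrange multiplier appears), i.e.\ an overdetermined pointwise identity relating $u$, $u'$, $\phi$, $\lambda$ and the multiplier on all of $(0,L)$.

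The final step is to show this overdetermined system has no solution satisfying the pole conditions. Here I would combine the pointwise identity with the ODE for $u$ to derive a first integral, then examine its behaviour as $r\to 0$ and $r\to L$. Near $r=0$ we have $\phi\sim r$, and a positive first eigenfunction behaves like $u\sim c\,r^{|k|}$ (for $k\neq 0$) or $u\to$ const (for $k=0$); plugging these asymptotics into the conserved quantity should force the Lagrange multiplier, or the value of $\lambda$, to take incompatible values at the two ends, yielding a contradiction. The case $k=0$ must be handled slightly separately (there $\lambda_1^0$ is the first nonzero eigenvalue of the quotient interval, and the positive "eigenfunction" is the constant, so one looks at the genuine first nonconstant mode), but the same mechanism — an overdetermined ODE identity incompatible with the smoothness at the poles — applies. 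I expect the main obstacle to be bookkeeping the boundary/pole conditions correctly: the variations $\psi$ are not arbitrary (they must keep $\phi'(0)=1$, $\phi'(L)=-1$ and $\phi>0$ inside), so one must be careful that the class of test functions is still rich enough to force the pointwise Euler--Lagrange identity, and then that the identity genuinely clashes with the prescribed endpoint jets rather than being vacuously satisfiable there.
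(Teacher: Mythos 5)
Your reduction to a one--dimensional Sturm--Liouville problem is the same starting point as the paper (which works in action--angle coordinates with a symplectic potential $s$ on $]-1,1[$ rather than warped--product coordinates), but from there your route genuinely diverges: you want to use simplicity of the bottom equivariant eigenvalue to say that $\lambda_1^k$ is differentiable along deformations, so that criticality becomes a vanishing first variation with a Lagrange multiplier for the area constraint. The paper never invokes simplicity; instead it proves a convexity/Hahn--Banach characterisation (Proposition \ref{characterisation_crit}) valid even when the eigenvalue is multiple, producing finitely many first eigenfunctions $f_0,\dots,f_N$ with $\sum_a\bigl(|f_a'/s''|^2-k^2|f_a|^2\bigr)''=0$ pointwise. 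If you do rely on simplicity you must (i) actually prove it for the singular endpoints (limit--point analysis for $k\neq 0$, and the first \emph{nonzero} eigenvalue for $k=0$), and (ii) reconcile it with the definition of criticality: with the product-of-one-sided-derivatives definition used here, a simple (hence differentiable) eigenvalue makes the interaction delicate, and in your constrained setting the admissible variations are not arbitrary (they must respect the pole conditions, the positivity of $\phi$, the area normalisation, and in general a varying length $L$), so deducing a \emph{pointwise} Euler--Lagrange identity needs the kind of bump-function/limiting argument the paper carries out; you flag this but do not resolve it.

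The more serious gap is that the decisive step is only anticipated, not performed. The content of the theorem is precisely the explicit first-variation formula and the contradiction extracted from it: in the paper one differentiates the pointwise identity once, substitutes the eigenvalue equation $\bigl(f_a'/s''\bigr)'=(-\lambda+k^2 s'')f_a$, and finds that $\sum_a \re(f_a\bar f_a')/s''$ is constant; since $1/s''$ vanishes at $\pm1$ this constant is $0$, hence $\sum_a|f_a|^2$ is constant, which is incompatible with the boundary vanishing of the $f_a$ when $k\neq0$, while for $k=0$ a second application of the same mechanism forces $f_a'\equiv0$. Your proposal says a first integral "should" force incompatible values at the two ends and that you "expect" the bookkeeping to be the main obstacle; until you write down your quadratic expression $Q[u]$, identify the conserved quantity, verify its boundedness and limits at the poles (where $\phi\to0$ and, for $k\neq0$, $u\sim c\,r^{|k|}$), and show the resulting identity clashes with the prescribed endpoint jets \emph{including the Lagrange multiplier term}, there is no proof. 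So the plan is reasonable and close in spirit to the paper's mechanism, but as it stands the heart of the argument is missing.
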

When $k=1$ this is a consequence of the Colbois-Dryden-El Soufi theorem from above.

We would like be able to remove the analyticity assumption. It is know due to results of Morrey that solutions to elliptic systems of PDE's whose coefficients are analytic have analytic solution if any. We will see that critical toric K\"ahler metrics and their eigenfunctions for the smallest eigenvalue  are solutions to a system of PDE's whose coefficients are analytic. Unfortunately the system is not elliptic. 

This paper is organised in the following way: in section \ref{back} we give some background on $\lambda_1$-critical metrics and on toric K\"ahler geometry, in section \ref{crit} we use the techniques  developed to deal with criticality in the Riemannian case and adapt them to our setting so as to extract a useful characterisation of $\lT$-critical metrics. We then use this characterisation to derive our main theorems in section \ref{proof}. The last section is somewhat independent of the rest of the paper. There, we show that there is an obvious system of PDEs that is satisfied the pair toric K\"ahler metric/corresponding eigenfunctions but the system is nowhere elliptic. 

\noindent \textbf{Acknowledgements.} I would like to thank Christine Breiner and Heather Macbeth for many illuminating conversations about $\lambda_1$-critical metrics.

\section{Background}\label{back}
\subsection{$\lambda_1$-critical  metrics}\label{back_crit}
Let $(M,g)$ be a Riemannian manifold. To fix conventions our Laplacian is given by $\Lap=d^*d$ and has positive eigenvalues. In coordinates $x_i$ on $M$ write $g=g_{ij}dx_i\otimes dx_j$. The Laplacian of a function $f$ on $M$ is given by
\begin{equation}\label{lap_in_coord}
\Lap f=-\frac{1}{\sqrt{\vol}}\frac{\del}{\del x_i}\left(\sqrt{\vol}g^{ij}\frac{\del f}{\del x_j}\right),
\end{equation}
where $g^{ij}$ denote the entries of the inverse of the matrix $\{g_{ij}\}$ and $\vol=\det{g_{ij}}$.
The smallest eigenvalue of the Laplacian is called first eigenvalue and is denoted by $\lambda_1(M,g)$. If we fix $M$, then $\lambda_1$ can be seen as a function on the space of all Riemannian metrics on $M$. Its is not a differentiable function of $g$ but it is Lipschitz. In fact given a one-parameter family of Riemannian metrics on $M$, $g_t$ with $g_0=g$  and analytic in $t$,  if $\lambda_1(g)$ is a multiple eigenvalue, then $\lambda_1$ may become non-differentiable at $g$.  Despite this, there are real valued functions $\Lambda_{0,t}, \cdots \Lambda_{N,t}$ and one parameters families of functions on $M$ $f_{0,t}, \cdots f_{N,t}$ satisfying
$$
\Lap f_{l,t}=  \Lambda_{l,t} f_{l,t}, \quad l=0,\cdots N
$$
and such that $\lambda_1(g_t)=\min\{\Lambda_{l,t},\, l=1,\cdots N\}$ so that the function  $\lambda_1(g_t)$ has a right and left derivative
$$
\frac{d\lambda_1(g_t)}{dt}(0^+)=\min  \left\{\frac{d\Lambda_{l,t}} {dt}(0), \, l=0,\cdots N \right\}
$$
$$
\frac{d\lambda_1(g_t)}{dt}(0^-)=\max \left\{\frac{d\Lambda_{l,t}} {dt}(0), \, l=0,\cdots  N \right\}
$$
\begin{definition} The metric $g$ is $\lambda_1$-critical if for any $1$-parameter family of metrics $g_t$ analytic in $t$ 
$$
\frac{d\lambda_1(g_t)}{dt}(0^-)\cdot \frac{d\lambda_1(g_t)}{dt}(0^+)<0.
$$
\end{definition}
(see \cite{n} and \cite{ei} for more details).
\subsection{Toric Geometry}
We will try to be brief and assume some familiarity with the subject. For more details see \cite{g} and \cite{a}. 
\begin{definition}
A K\"ahler manifold $(M,\omega,g)$ where $\omega$ is a symplectic form and $g$ is a Riemannian metric is said to be toric if it admits an isometric, Hamiltonian $\bT^n$-action. 
\end{definition}
In this case there is a moment map associated to the action $\phi:M\ra (\mbox{Lie}(\bT^n))^*\simeq \bR^n$ and the moment map image $\pol$ is a convex polytope of a special type (a Delzant polytope). In particular it can be written in the form
$$
\pol=\left\{x\in \bR^n: x\cdot \nu_k-c_k>0, \, k=1,\cdots, d\right\}
$$
and at every vertex, there is an $SL(n,\bZ)$ transformation taking a neighbourhood of that vertex into a neighbourhood of $0$ in
$$
\left\{x\in \bR^n: x_k>0, \, k=1,\cdots, n\right\}.
$$
There is an open dense set in $M$ which we denote by $M^0$ where $\bT^n$ acts freely and there is an equivariant symplectomorphism $\psi: M^0\ra \pol\times \bR^n$ whose first factor is given by the moment map $\phi$. Here the $\bT^n$-action on $ \pol\times \bR^n$ is given by the usual $\bT^n$-action on the second factor. Said differently, there are $\bT^n$-equivariant Darboux coordinates $(x,\theta)$ on $M^0$. We refer to these as action-angle coordinates. 

Given a polytope in $\bR^n$ of Delzant type one can construct from it a toric K\"ahler manifold $M_\pol$ in a canonical manner (see \cite{g}). It was shown by Delzant that in fact $\pol$ determines $(M,\omega)$ up to symplectomorphism. Abreu showed there is an effective way to parametrize all compatible $\bT^n$-invariant K\"ahler metrics. 

\begin{definition}\label{spot}
Let $\pol$ be a Delzant polytope. A function $\ps\in \mathcal{C}^\infty(\pol)$ is called a symplectic potential if
\begin{itemize}
\item $\hess \ps$ is positive definite,
\item $\ps-\sum_{k=1}^d \left(x\cdot \nu_k-c_k\right)\log(x\cdot \nu_k-c_k)$ is smooth on $\bar\pol$,
\item $\hess \ps$ when restricted to each face of $\pol$ is positive definite.
\end{itemize}
We denote the set of all such functions by $\spot(\pol)$.
\end{definition}
One can associate to each $\ps\in \spot(\pol)$ a K\"ahler structure $g_\ps$ whose corresponding K\"ahler metric in action-angle coordinates can be written as
$$
(\ps)_{ij}dx_i\otimes dx_j+(\ps)^{ij}d\theta_i\otimes d\theta_j.
$$
In fact it can be shown that all toric K\"ahler structures arise this way. 
The K\"ahler structure constructed in \cite{g} is called the Guillemin K\"ahler structure. Its symplectic potential is
$$
\ps_G=\sum_{k=1}^d \left(x\cdot \nu_k-c_k\right)\log(x\cdot \nu_k-c_k)-\left(x\cdot \nu_k-c_k\right).
$$

We make use of the following very elementary fact:
\begin{fact}
Smooth $\bT^n$-invariant functions on a toric K\"ahler manifold $M$ are in 1 to 1 correspondence with smooth functions on the closure of the moment polytope, $\bar{\pol}$ of $M$.
\end{fact}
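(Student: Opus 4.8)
The correspondence is realised by pull-back along the moment map $\phi\colon M\to\bar\pol$. Since the coadjoint action of a torus is trivial, $\phi$ is $\bT^n$-invariant, and since it is onto $\bar\pol$, the pull-back $\phi^*\colon\mC^\infty(\bar\pol)\to\mC^\infty(M)^{\bT^n}$ is well defined and injective. The substance of the statement is the surjectivity of $\phi^*$: one must show that every smooth $\bT^n$-invariant function $h$ on $M$ is of the form $f\circ\phi$ with $f$ smooth up to the boundary of $\pol$.

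Restricting $h$ to the free locus $M^0$ and using action--angle coordinates $(x,\theta)$, the invariance of $h$ forces it to be independent of $\theta$, so $h|_{M^0}=f\circ\phi$ for a unique $f\in\mC^\infty(\pol)$ on the open polytope. As $\pol$ is dense in $\bar\pol$, it remains only to check that $f$ extends smoothly across $\del\pol$, which is a local question near each boundary point.

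Fix $p\in M$ mapping to the relative interior of a codimension-$k$ face of $\pol$, $1\le k\le n$. By the Delzant equivariant local normal form (see \cite{g}, \cite{a}), a $\bT^n$-invariant neighbourhood of the fibre $\phi^{-1}(\phi(p))$ is $\bT^n$-equivariantly symplectomorphic to a neighbourhood of the zero section in $\bC^k\times T^*\bT^{n-k}$, on which $\bT^n$ acts by rotating each coordinate $z_j$ and translating the $\bT^{n-k}$ factor, and for which the moment map reads $(z,\theta,y)\mapsto A\bigl(\tfrac12|z_1|^2,\dots,\tfrac12|z_k|^2,y\bigr)+b$, where $x\mapsto Ax+b$ is an affine identification of the model corner with $\bar\pol$ near $\phi(p)$. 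In these coordinates $h$ is a smooth function of $(z_1,\dots,z_k,\theta,y)$ invariant under the $k$ circle actions and the translation action, hence---by Whitney's theorem on differentiable invariants of a circle action, applied $k$ times with smooth dependence on the parameter $y$, equivalently by G.~Schwarz's theorem on smooth invariants of compact groups---of the form $\tilde h(|z_1|^2,\dots,|z_k|^2,y)$ with $\tilde h$ smooth on $\bR^k_{\ge 0}\times\bR^{n-k}$. Composing $\tilde h$ with $x\mapsto Ax+b$ exhibits $f$, near $\phi(p)$, as the restriction to $\pol$ of a function smooth up to $\del\pol$; by uniqueness of smooth extensions from the dense set $\pol$ these local pieces agree and glue to a single $f\in\mC^\infty(\bar\pol)$. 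Finally $\phi^*f$ agrees with $h$ on the dense set $M^0$, hence $\phi^*f=h$, so $\phi^*$ is surjective.

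The one step that is not bookkeeping is the local identification of smooth $\bT^n$-invariant functions near $\phi^{-1}(\phi(p))$ with smooth functions of the squared moduli $|z_1|^2,\dots,|z_k|^2$; this rests on the Whitney/Schwarz theorem on differentiable invariants. Everything else reduces to the standard equivariant normal form together with the fact that the components of the moment map are affine in the $|z_j|^2$.
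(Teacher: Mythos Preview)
Your proof is correct and follows the same underlying idea as the paper—realising the correspondence via pull-back along the moment map $\phi$. The paper's argument is much terser: it merely notes that $f(x)=F(\phi^{-1}(x))$ is well defined (since fibres of $\phi$ are $\bT^n$-orbits) and that $F=f\circ\phi$ is the inverse construction, without addressing smoothness of $f$ near $\partial\pol$ or of $F$ near the non-free locus. You fill in precisely this point, using the Delzant equivariant normal form together with the Whitney/Schwarz theorem on smooth invariants to show that the quotient function is smooth up to the boundary. So the route is the same, but your version supplies the substantive step the paper leaves implicit.
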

\begin{proof}
We denote the space of smooth $\bT^n$-invariant functions by $\mC^\infty_T(M)$. Denote the moment map for the $\bT^n$-action by $\phi$. Given an invariant function $F$ on $M$, set $f$ to be $f(x)=F(\phi^{-1}(x))$. This is well defined because $\phi(p)=\phi(q)$ implies $p$ and $q$ are in the same $\bT^n$-orbit and $F$ is invariant. Conversely given $f\in \mC^\infty(\pol)$, we define $F=f\circ \phi$.
\end{proof}
Similarly we have:
\begin{fact}
Continuous $\bT^n$-equivariant complex functions on a toric K\"ahler manifold $M$ are in 1 to 1 correspondence with continuous complex functions on the closure of the moment polytope $\bar{\pol}$ of $M$ that vanish on $\partial\pol$
\end{fact}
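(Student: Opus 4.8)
The plan is to follow the same strategy as for the previous fact, but to carry out the construction on the open dense set $M^0$, where action--angle coordinates are available, and then to control the behaviour along $\del\pol$. Write the character defining equivariance as $\chi(e^{i\phi})=e^{i\langle m,\phi\rangle}$ for the weight $m\in\bZ^n$. Via the $\bT^n$--equivariant symplectomorphism $\psi\colon M^0\ra\pol\times\bR^n$, a continuous $\chi$--equivariant function $F$ on $M$ restricts to a continuous function of the action--angle coordinates $(x,\theta)$; evaluating the equivariance relation $F(x,\theta+\phi)=e^{i\langle m,\phi\rangle}F(x,\theta)$ at $\theta=0$ shows that necessarily $F(x,\theta)=f(x)\,e^{i\langle m,\theta\rangle}$, where $f(x):=F(x,0)$ is a continuous function on the open polytope $\pol$. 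Conversely, this same formula turns any continuous function $f$ on $\pol$ into a continuous $\chi$--equivariant function on $M^0$. So the only real content is to decide which of these functions extend continuously across $M\setminus M^0=\phi^{-1}(\del\pol)$, and to translate that into a condition on $f$.

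To do this I would argue locally near $\del\pol$. Since $|\chi|\equiv1$, the modulus $|F|=|f|\circ\phi$ is $\bT^n$--invariant, so by the argument proving the previous fact it descends to a continuous function on $\bar\pol$; in particular $f$ extends continuously to $\bar\pol$ whenever $F$ is continuous on $M$. Fix a point $p$ lying over the relative interior of a face $\mathcal{F}$ of $\pol$ cut out by the facets with primitive normals $\nu_{k_1},\dots,\nu_{k_r}$. By the Delzant local normal form there are $\bT^n$--equivariant coordinates near $p$ in which a neighbourhood of $p$ is $\bC^r\times(\bC^\ast)^{n-r}$ with $\bT^n$ acting in the standard linear way, the circle subgroups corresponding to $\nu_{k_1},\dots,\nu_{k_r}$ collapsing over $\mathcal{F}$. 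In these coordinates $e^{i\langle m,\theta\rangle}$ splits off a factor $e^{i\langle m',\theta'\rangle}$ on the collapsing $\bC^r$, where $m'_i=\langle m,\nu_{k_i}\rangle$, and this factor extends continuously over $0\in\bC^r$ precisely when $m'=0$, that is, when $\chi$ is trivial on the stabiliser of $p$. Hence: if $\langle m,\nu_{k_i}\rangle\ne0$ for some $i$, continuity of $F=f\,e^{i\langle m,\theta\rangle}$ at every such $p$ forces $f\equiv0$ on $\mathcal{F}$; conversely, if $f$ vanishes continuously on every such face, extending $F$ by $0$ over $\phi^{-1}(\del\pol)$ produces a continuous function. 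This identifies the image of the correspondence with the $f\in\mC(\bar\pol)$ vanishing on the union of the facets $\{x\cdot\nu_k-c_k=0\}$ on which $\chi$ acts nontrivially, which is all of $\del\pol$ whenever $\langle m,\nu_k\rangle\ne0$ for every facet normal --- in particular in the only case needed later, $\bT^n=S^1$ acting on the surface $S^2$ with a nonzero weight $k$.

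I expect the extension argument of the second paragraph to be the point that needs the most care: one must run the local normal form uniformly as $p$ ranges over the relative interior of a face, check that the obstruction to continuous extension is governed exactly by the single factor $e^{i\langle m',\theta'\rangle}$ on the collapsing coordinates, and patch the models coming from different faces and vertices. All of this is a routine application of the Delzant normal form together with the $\bT^n$--invariance of $|F|$, and for the application to Theorem~\ref{s2} it collapses to the elementary one--variable remark that, for $k\ne0$, the function $g(x)\,e^{ik\theta}$ extends continuously over a pole of $S^2$ if and only if $g$ vanishes at the corresponding endpoint of the interval.
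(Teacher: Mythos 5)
Your proposal is correct and the core of it coincides with the paper's: both identify a $k$-equivariant $F$ with $f(x)e^{\bi k\cdot\theta}$ via the action--angle coordinates on $M^0$, and then the whole question is the behaviour over $\phi^{-1}(\del\pol)$. Where you diverge is in how you treat that boundary behaviour, and the comparison is instructive. The paper's necessity argument is more elementary than yours: it simply evaluates the equivariance identity at a point $p$ with nontrivial stabiliser, getting $F(p)=e^{\bi k\cdot\theta}F(p)$ and hence $F(p)=0$; and for sufficiency it extends $F$ by zero, continuity being immediate from $|F|=|f|\circ\phi$ with $f$ continuous on $\bar\pol$ and vanishing on $\del\pol$ --- no Delzant local normal form is needed for either direction, so your ``step needing the most care'' can be bypassed entirely. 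What your heavier local-model analysis buys, and what the paper glosses over, is precision about when the forced vanishing actually covers all of $\del\pol$: the paper's one-line argument tacitly assumes $e^{\bi k\cdot\theta}\neq 1$ for some element of the stabiliser, which over a facet with normal $\nu$ holds exactly when $k\cdot\nu\neq 0$. Your observation that, for general $k\in\bZ^n$ with $n>1$, equivariance only forces vanishing on those facets where the character is nontrivial on the stabiliser (so the Fact as literally stated requires $k\cdot\nu_l\neq 0$ for every facet normal) is a genuine sharpening; as you note, it is harmless for the paper's actual use of the Fact, namely $S^1$ acting on $S^2$ with weight $k\neq 0$, where there is a single pair of ``facets'' and the condition is automatic.
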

\begin{proof}
Characters in $\bT^n$ can be identified with elements in $\bZ^n$. Given $k\in\bZ^n$ we denote the space of continuous $k$-equivariant functions by $\mC_k(M)$. 

We start by noting that if $F:M\ra\bC$ is $k$-equivariant for $k\ne 0$ then $F$ vanishes on points with non trivial isotropy. Let $F$ be equivariant. If $p$ is a point where $\bT^n$ does not act freely i.e. if $\phi(p)\in \partial \pol$ then for $e^{\bi\theta}$ non-trivial in the stabiliser group of $p,$ $F(e^{\bi\theta}p)=F(p)=e^{\bi\theta\cdot }F(p)$ so that $F(p)=0.$ 

Let $\psi: M^0\ra \pol\times \bR^n$ denote the action-angle coordinates map. If $f$ is a function on $\pol$, we define a $k$-equivariant function on $M^0$ by setting $F\circ\psi^{-1}(x,\theta)=f(x)e^{\bi k\cdot \theta}$. If $f$ vanishes on $\partial\pol$ we can extend $F$ by continuity to $M$ to be zero on $M\setminus M^0$. Conversely, given $F$ $k$-equivariant, define $f$ on $\pol$ by $f(x)=F\circ\psi^{-1}(x,0)$ and extend by $0$ to the boundary. As we have seen $F$ vanishes on $M \setminus M^0$ and $\phi(M\setminus M^0)=\partial \pol$ so that $f$ is continuous on $\partial \pol$.
\end{proof}
\subsection{Equivariant spectrum on toric manifolds}
Let $(M,g)$ be a Riemannian manifold with an isometric $G$-action. Let $\chi$ be a group character and let $\mC^\chi(M)$ denote the set of continuous $\chi$-equivariant  functions.
$$
\mC^\chi(M)=\{F\in \mC(M,\bC):F(h\cdot p)=\chi(h)F(p),\,\forall h\in G\}.
$$
The Laplacian induced from $g$ commutes with the $G$-action because $G$ acts by isometries hence it restricts to $\mC^\chi(M)\cap \mC^\infty(M)$ for any given character of the group $G$. 
\begin{definition}
Let $(M,g)$ be a Riemannian manifold with an isometric $G$-action. The $\chi$-equivariant first eigenvalue is the smallest eigenvalue of $\Lap_{|\mC^\chi(M)\cap \mC^\infty(M)}$ i.e.
$$
\lambda_1^\chi(M,g, G)=\sup\left\{\frac{\int_M|dF|^2\vol_g}{\int_M|F|^2\vol_g}, \, F\in \mC^\chi(M)\cap \mC^\infty(M)\right\}.
$$
\end{definition}
When $\chi$ is the trivial character we often write $\lambda_1^\chi=\lambda_1^G.$ 

We will be using these notions in the setting of Toric K\"ahler manifolds and we will think of $\lambda_1^k$ as a function of the symplectic potential inducing the K\"ahler metric i.e. given $(M,\omega,\bT^n)$ symplectic toric with moment polytope $\pol$ and given $k\in \bZ^n$, we consider
$$
\lambda_1^k:\spot(\pol)\ra \bR^+
$$
and its variations.
 
Given $k\in \bZ^n$, if $F$ is $k$-equivariant, it can be written in action-angle coordinates as $f(x)e^{\bi k\cdot \theta}$ so that from equation (\ref{lap_in_coord}) we have 
\begin{equation}
\Lap F=-e^{\bi k\cdot \theta}\left(\frac{\del}{\del x_i}\left(\ps^{ij}\frac{\del f}{\del x_j}\right)-f k_ik_j\ps_{ij}.\right)
\end{equation}
Note that because $(x,\theta)$ are Darboux coordinates $\vol=1$.
The space of $k$-equivariant eigenfunctions for $\lambda_1^k$, which we denote by $E_1^k$, (or $E_1^\bT$ if $k=0$ in the invariant case) can be identified with a subset of $\mC^\infty(\pol)$. Namely, if $k\ne 0$
$$
E_1^k\simeq\left\{f\in \mC^\infty(\pol):\frac{\del}{\del x_i}\left(\ps^{ij}\frac{\del f}{\del x_j}\right)-f k^t\hess (\ps)k=\newb{-}\lambda_1^k f, \, f=0 \,\text{in} \,\del \pol\right\}
$$
and 
$$
E_1^\bT\simeq\left\{f\in \mC^\infty(\pol):\frac{\del}{\del x_i}\left(\ps^{ij}\frac{\del f}{\del x_j}\right)=\newb{-}\lT f\right\}.
$$
In the invariant case, we often identify $f\in\mC(\pol)$ with the associated eigenfunction on $M$ i.e. we confuse $f$ with $f\circ\phi$ and we write $\Lap f$ to mean $-\frac{\del}{\del x_i}\left(\ps^{ij}\frac{\del f}{\del x_j}\right).$
\section{Critical $\lT$, $\lambda_1^k$ metrics}\label{crit}
In this section we fix a toric symplectic manifold $(M,\omega,\bT^n)$ with moment polytope $\pol$. The first goal is to define critical metrics for the invariant/equivariant first eigenvalue. This is almost exactly a repetition of subsection \ref{back_crit}. To avoid the repetition and give a more unified treatment of the equivariant extremization problem and the classical extremization problem, we could have used the framework developed by Macbeth in \cite{mb}. This would involve showing that the measure described in the main theorem there is of a special type because the spaces $E_1^k$ and $E_1^\bT$ are finite dimensional. 

Instead we will go through the argument in subsection \ref{back_crit} again. We want to define critical values for $\lambda_1^k:\spot(\pol)\ra \bR^+$ but as in the Riemannian case discussed in subsection \ref{back_crit} $\lambda_1^k:\spot(\pol)\ra \bR^+$ is not a differentiable function at all points.  Given a one parameter family in $\spot(\pol)$ with $\ps_0=\ps$  and analytic in $t$, there are real valued functions $\Lambda_{0,t}, \cdots \Lambda_{N,t}$ and one parameters families of functions on $\pol$, $f_{0,t}, \cdots f_{N,t}$ satisfying
$$
\Lap f_{l,t}+f_{l,t}k^t\hess (\ps) k=  \Lambda_{l,t} f_{l,t}, \quad k=0,\cdots N.
$$
and such that $\lambda_1^k(\ps_t)=\min\{\Lambda_{l,t},\, l=1,\cdots N\}$ so that the function  $\lambda_1^k$ has a right and left derivative
$$
\frac{d\lambda_1^k(\ps_t)}{dt}(0^+)=\min  \left\{\frac{d\Lambda_{l,t}} {dt}(0), \, l=0,\cdots N \right\}
$$
$$
\frac{d\lambda_1^k(\ps_t)}{dt}(0^-)=\max \left\{\frac{d\Lambda_{l,t}} {dt}(0), \, l=0,\cdots  N \right\}
$$
\begin{definition} The symplectic potential $\ps$ is $\lambda_1^k$-critical if for any $1$-parameter family of symplectic potentials $\ps_t$, analytic in $t$, 
$$
\frac{d\lambda_1^k(\ps_t)}{dt}(0^-)\cdot \frac{d\lambda_1^k(\ps_t)}{dt}(0^+)<0.
$$
\end{definition}
Setting $\delta \ps$ to be
$
\frac{d \ps}{dt}(0),
$
we write $d\lambda_1^k(f_l,\delta \ps)=\frac{d\Lambda_{l,t}}{dt}(0).$ In fact, we can define $d\lambda_1^k(f,\delta \ps)$ for any $f\in E_1^k$ as follows. Consider the Riemannian metrics corresponding to  $\ps_t=\ps+t\delta \ps$ for $t$ sufficiently small. For each such $t,$ $E_1^k(\ps_t)$ is the first $k$-equivariant eigenspace. We extend $f$ to a one parameter family $f_t$ such that $f_t\in E_1^k(\ps_t)$ and let $\Lambda_t$ be the eigenvalue corresponding to $f_t$. 
$$
d\lambda_1^k(f,\delta \ps)=\frac{d\Lambda_{t}}{dt}(0).
$$
As we will see ahead this does not actually depend on $f_t$ but only on $f$. In fact the same phenomenon occurs in the non equivariant case of subsection \ref{back_crit} and is a manifestation of something more general that is explained and exploited in \cite{mb}.

We now use the toric framework to calculate $d\lambda_1^k(f,\delta \ps).$
\begin{lemma}
Let $(M,\omega,\bT^n)$ be toric with moment polytope $\pol$. Let $\ps\in \spot(P)$. Given $\delta \ps\in \mC^\infty(\pol)$ such that $\delta \ps$ and $d\delta \ps$ vanish on $\del\pol$ and $f\in \mC^{\infty}(\pol)$ corresponding to an eigenfunction of the Laplacian associated to $\ps$,
$$
d\lT(f,\delta \ps)=-\int_\pol \frac{\del^2 \left(\ps^{il}f_l \ps^{jr}f_r\right)}{\del x_i \del x_j}\delta \ps dx,
$$
where we write $f_r$ for $\frac{\del f}{\del x_r}$. If furthermore $f=0$ on $\del \pol$ then 
$$
d\lambda_1^k(f,\delta \ps)=\int_\pol \left(-\frac{\del^2 \re\left(\ps^{il}f_l \ps^{jr}\bar{f}_r\right)}{\del x_i \del x_j}+k^t\hess |f|^2k\right)\delta \ps dx.
$$
\end{lemma}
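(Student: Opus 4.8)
The plan is to run first-order eigenvalue perturbation theory in action-angle coordinates. Fix $\ps\in\spot(\pol)$ and $\delta\ps$ as in the statement; then $\ps_t=\ps+t\delta\ps$ lies in $\spot(\pol)$ for $|t|$ small, since $\hess\ps>0$ and positivity of $\hess\ps$ along faces are open conditions and $\ps_t-\ps$ is smooth on $\bar\pol$. Following the discussion preceding the lemma I would extend $f$ to an analytic family $f_t\in E_1^k(\ps_t)$ with eigenvalue $\Lambda_t$, $\Lambda_0=\lambda_1^k$, normalised so that $\int_\pol|f_t|^2\,dx=1$ for all $t$. This is the $L^2(M)$-normalisation up to a $t$-independent constant, because in action-angle coordinates the symplectic volume form is $dx\,d\theta$ and does not move with $t$.

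First I would rewrite $\Lambda_t$ as a Rayleigh quotient: multiplying the eigenvalue equation $-\frac{\del}{\del x_i}\bigl(\ps_t^{ij}(f_t)_j\bigr)+f_t\,k^t\hess(\ps_t)k=\Lambda_t f_t$ by $\bar f_t$, integrating over $\pol$, and integrating by parts once gives
\[
\Lambda_t=\int_\pol\Bigl(\ps_t^{ij}(f_t)_i\overline{(f_t)_j}+|f_t|^2\,k^t\hess(\ps_t)k\Bigr)\,dx,
\]
the boundary term vanishing because $f_t=0$ on $\del\pol$ when $k\neq0$, and because of the degeneration of $\ps^{ij}$ at $\del\pol$ when $k=0$. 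Differentiating at $t=0$, the terms involving $\dot f:=\frac{df_t}{dt}(0)$ assemble, after one further integration by parts, into $2\,\re\int_\pol\dot f\,\overline{\bigl(\Lap f+f\,k^t\hess(\ps)k\bigr)}\,dx=2\lambda_1^k\,\re\int_\pol\dot f\,\bar f\,dx=\lambda_1^k\frac{d}{dt}\bigl(\int_\pol|f_t|^2\,dx\bigr)(0)=0$. Thus the extension $f_t$ drops out of the answer (which also establishes the asserted independence of $d\lambda_1^k(f,\delta\ps)$ from the extension), and only the explicit $t$-derivative of the quadratic form survives.

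Next I would differentiate that form. From $\ps_t^{ij}=\bigl((\ps_t)_{ij}\bigr)^{-1}$ and the formula for the derivative of an inverse matrix, $\frac{d}{dt}\ps_t^{ij}=-\ps^{ia}(\delta\ps)_{ab}\ps^{bj}$ at $t=0$, while $\frac{d}{dt}\hess(\ps_t)=\hess(\delta\ps)$. Since $\ps^{il}f_l\,\ps^{jr}\bar f_r$ is invariant under the swap $i\leftrightarrow j$ up to complex conjugation, only its real part contributes to the symmetric contraction, and one obtains
\[
d\lambda_1^k(f,\delta\ps)=\int_\pol\frac{\del^2\delta\ps}{\del x_i\del x_j}\Bigl(-\re\bigl(\ps^{il}f_l\,\ps^{jr}\bar f_r\bigr)+k_ik_j\,|f|^2\Bigr)\,dx.
\]
Integrating by parts twice to move the two derivatives off $\delta\ps$ — the boundary terms vanishing because $\delta\ps$ and $d\delta\ps$ vanish on $\del\pol$ — gives exactly the stated formula for $d\lambda_1^k$. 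In the invariant case $k=0$ the eigenfunction $f$ is real, so $\bar f=f$ and the $|f|^2$ term is absent; this is the stated formula for $d\lT$.

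The step I expect to be the main obstacle is justifying, near $\del\pol$, the integrations by parts above and the self-adjointness used in the Rayleigh-quotient step: one needs $\ps^{il}f_l$ and the products $\ps^{il}f_l\,\ps^{jr}\bar f_r$ to be regular enough up to $\bar\pol$ for the boundary integrals to make sense and vanish. When $k\neq0$ this follows from the Dirichlet condition $f=0$ on $\del\pol$ together with the known asymptotics of $\ps^{ij}$ at the facets. When $k=0$ it rests on the standard regularity theory for the toric Laplacian — the fact that $-\frac{\del}{\del x_i}\bigl(\ps^{ij}f_j\bigr)$, despite its apparent degeneration at $\del\pol$, is a genuine self-adjoint elliptic operator on $M$ with smooth invariant eigenfunctions — so that the pairings $\ps^{ij}f_j$ do extend regularly to $\bar\pol$.
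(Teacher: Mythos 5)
Your proposal is correct and follows essentially the same route as the paper: differentiate the energy/Rayleigh functional along the linear path $\ps_t=\ps+t\delta\ps$, kill the $\dot f$ terms using the eigenvalue equation together with the $L^2$ normalisation, use $\frac{d}{dt}\ps_t^{ij}=-\ps^{ia}(\delta\ps)_{ab}\ps^{bj}$ and $\frac{d}{dt}\hess(\ps_t)=\hess(\delta\ps)$, and then integrate by parts twice, the boundary terms vanishing because $\delta\ps$ and $d\delta\ps$ vanish on $\del\pol$. Your added remarks on boundary regularity near $\del\pol$ only make explicit what the paper leaves implicit, so no further comment is needed.
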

\begin{proof}
Consider the path $\ps_t=\ps+t\delta \ps$ in $\spot(\pol),$ the corresponding path of Riemannian metrics on $M$ which we denote by $g_t$ and a path $f_t$ in $\mC(\pol)$ corresponding to a path of eigenfunctions in $E_1^\bT(g_{t}),$ the eigenspace for the smallest invariant eigenfunction for the Laplacian associated with $g_t,$ such that $f_0=f.$ We have $\Lap_t f_t={\lT}_t f_t.$ We want to calculate 
$$
\frac{d}{dt}_{|t=0}\lT(f_t,g_t).
$$
We may assume that $\int_\pol f_t^2dx=1$ for all $t$ and taking derivatives this implies $\int_\pol f\dot{f}dx=0$ where $\dot{f}=\frac{d f_t}{dt}.$ The quantity $ \frac{d}{dt}_{|t=0}\lT(f_t,g_t)$ is given by
\begin{IEEEeqnarray*}{l}
\frac{d}{dt}_{|t=0}\int_\pol |df_t|^2_{g_t}dx\\
=\frac{d}{dt}_{|t=0}\int_\pol (\del f_t)^t\hess^{-1}(\ps_t)\del f_t dx\\
=-\int_\pol (\del f)^t\hess^{-1}(\ps)\frac{d\hess (\ps_t)}{dt}_{|t=0}\hess^{-1}(\ps)\del f dx+2\int_\pol (\del \dot{f})^t\hess^{-1}(\ps)\del f dx\\
=-\int_\pol (\del f)^t\hess^{-1}(\ps){\hess (\delta \ps)}\hess^{-1}(\ps)\del f_t dx+2\int_M \langle d\dot (f\circ \phi),d(f\circ\phi)\rangle dx\\
=-\int_\pol (\del f)^t\hess^{-1}(\ps){\hess (\delta \ps)}\hess^{-1}(\ps)\del f_t dx+2\int_M \dot f\circ \phi \Lap (f\circ\phi) dx\\
=-\int_\pol (\del f)^t\hess^{-1}(\ps){\hess (\delta \ps)}\hess^{-1}(\ps)\del f_t dx+2\lT\int_\pol \dot f  f dx\\
=-\int_\pol (\del f)^t\hess^{-1}(\ps){\hess (\delta \ps)}\hess^{-1}(\ps)\del f_t dx\\
=- \int_\pol\left(\ps^{il}f_l \ps^{jr}f_r\right)(\delta \ps)_{ij}dx,\\
\end{IEEEeqnarray*}
where we have used $\phi$ to mean the moment map for the torus action on $M$. The conditions that $\ps$ and $d\ps$ vanish on $\partial \pol$ ensure that we can integrate the above by parts without picking up boundary terms and hence
$$
\frac{d}{dt}_{|t=0}\lT(f_t,g_t)=- \int_\pol\frac{\del^2\left(\ps^{il}f_l \ps^{jr}f_r\right)}{\del x_i\del x_j}(\delta \ps) dx,
$$
as claimed. The $k$-equivariant case is similar.
\begin{IEEEeqnarray*}{l}
\frac{d}{dt}_{|t=0}\lambda_1^k(f_t,g_t)\\
=\frac{d}{dt}_{|t=0}\int_\pol |d(e^{\bi k\cdot \theta}f_t)|^2_{g_t}dx\\
=\frac{d}{dt}_{|t=0}\int_\pol \left(\re\left( (\del f_t)^t\hess^{-1}(\ps_t)\del \bar{f}_t \right) +|f_t|^2k^t\hess (\ps_t) k\right)dx\\
= \int_\pol\left(-\re(\ps^{il}f_l \ps^{jr}\bar{f}_r)+|f|^2k_ik_j\right)(\delta \ps)_{ij}dx.\\
\end{IEEEeqnarray*}
Integrating by parts we get 
$$
\frac{d}{dt}_{|t=0}\lT(f_t,g_t)= \int_\pol\left(-\frac{\del^2\re\left(\ps^{il}f_l \ps^{jr}\bar{f}_r\right)}{\del x_i\del x_j}+k^t\hess|f|^2 k\right)\delta \ps dx.
$$
\end{proof}
We are now ready to prove our main characterisation of $\lT$-critical metrics in this section.
\begin{proposition}\label{characterisation_crit}
In the same setting as above, the symplectic potential $\ps$ is $\lambda_1^k$-critical iff for all $\delta \ps\in \mC^\infty(\bar{\pol})$ there are functions on $\pol$, $\{f_0,\cdots, f_N\}$, corresponding to $k$-equivariant eigenfunctions in $E_1^k(\ps)$ and $\alpha_0,\cdots, \alpha_N \in [0,1]$ satisfying
$$
\sum_{a=1}^N \alpha_a \left(\left(\frac{\del^2 \re \left(\ps^{il}f_{a,l} \ps^{jr}\bar{f}_{a,r}\right)}{\del x_i \del x_j}\right)-k^t\hess |f_a|^2k\right)=0.
$$
\end{proposition}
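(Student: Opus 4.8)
The plan is to transplant the Nadirashvili and El Soufi-Ilias characterisation of $\lambda_1$-critical metrics into the toric picture, the analytic input being the first-variation formula of the previous lemma. For $f\in E_1^k(\ps)$ set
$$
Q_f:=\frac{\del^2 \re\left(\ps^{il}f_l \ps^{jr}\bar f_r\right)}{\del x_i \del x_j}-k^t\hess |f|^2k ,
$$
so that the lemma reads $d\lambda_1^k(f,\delta\ps)=-\int_\pol Q_f\,\delta\ps\,dx$. Three elementary observations are needed up front: $d\lambda_1^k(f,\delta\ps)$ is quadratic in $f$; it depends only on $f$ and not on the chosen extension $f_t$ (this is exactly what the computation in the lemma shows, since the final expression contains no $\dot f$); and, because $E_1^k(\ps)$ is finite-dimensional and $f\mapsto Q_f$ factors through $\mathrm{Sym}^2 E_1^k(\ps)$, all the $Q_f$ lie in a single finite-dimensional subspace $W$ of the dual of $\mC^\infty(\bar\pol)$.

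Next I would invoke analytic perturbation theory (Rellich--Kato) for the self-adjoint family of operators $\Lap_{\ps_t}+k^t\hess(\ps_t)k$ along an analytic path $\ps_t=\ps+t\,\delta\ps$: the eigenvalues branching off $\lambda_1^k(\ps)$ are analytic in $t$, and their derivatives at $t=0$ are precisely the eigenvalues of the quadratic form $q_{\delta\ps}(f):=d\lambda_1^k(f,\delta\ps)$ on $E_1^k(\ps)$. Therefore
$$
\frac{d\lambda_1^k(\ps_t)}{dt}(0^+)=\min_{\|f\|=1}q_{\delta\ps}(f),\qquad \frac{d\lambda_1^k(\ps_t)}{dt}(0^-)=\max_{\|f\|=1}q_{\delta\ps}(f),
$$
and so $\ps$ is $\lambda_1^k$-critical exactly when, for every admissible $\delta\ps$, the form $q_{\delta\ps}$ is not definite on $E_1^k(\ps)$ — that is, neither positive definite nor negative definite.

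The final step is a separation argument. Let $K$ be the convex hull of $\{Q_f:f\in E_1^k(\ps),\ \int_\pol |f|^2\,dx=1\}$; since this set is the continuous image of the unit sphere of the finite-dimensional space $E_1^k(\ps)$, $K$ is a compact convex subset of $W$. I claim the criticality condition above is equivalent to $0\in K$. If $0=\sum_a\alpha_a Q_{f_a}$ with $\alpha_a\ge 0$ and $\sum_a\alpha_a=1$, then $\sum_a\alpha_a q_{\delta\ps}(f_a)=0$ for every $\delta\ps$, so the numbers $q_{\delta\ps}(f_a)$ cannot all be $>0$ nor all $<0$ and $q_{\delta\ps}$ is not definite. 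Conversely, if $0\notin K$ then, $K$ being compact and convex, the separating-hyperplane theorem gives a linear functional on $W$ strictly positive on $K$; representing it as $Q\mapsto -\int_\pol Q\,\delta\ps\,dx$ for some $\delta\ps\in\mC^\infty(\bar\pol)$ — possible because no nonzero element of the finite-dimensional $W$ is annihilated by all smooth functions on $\bar\pol$ — produces a direction along which $q_{\delta\ps}$ is positive definite, so $\ps$ is not critical. Rescaling the $\alpha_a$ turns $0\in K$ into the stated relation $\sum_a\alpha_a Q_{f_a}=0$ with $\alpha_a\in[0,1]$.

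I expect the delicate point to be the interface between the analytic and convex-geometric sides rather than either side in isolation. First, the first-variation formula was proved by an integration by parts that a priori asks $\delta\ps$ and $d\delta\ps$ to vanish on $\partial\pol$, so one must verify that it survives for the full class of admissible variations $\delta\ps\in\mC^\infty(\bar\pol)$; when $k\ne 0$ this follows from $f|_{\partial\pol}=0$ together with the boundary degeneration of $\ps^{ij}=(\hess\ps)^{-1}$, which forces $\ps^{il}f_l\to 0$ at $\partial\pol$ and makes the boundary integrals vanish. Second, one must ensure the separating functional is realised by an honest admissible variation, i.e. that $\delta\ps\mapsto\bigl(Q\mapsto\int_\pol Q\,\delta\ps\,dx\bigr)$ maps onto $W^*$. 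There is also a small amount of care to take in matching the definition's strict inequality to the convex statement — the degenerate directions $\delta\ps$ that annihilate every $Q_f$, and the distinction between $K$ and its interior. The remaining ingredients — the perturbation-theoretic identification of the one-sided derivatives with $\min$ and $\max$ of $q_{\delta\ps}$, and the compactness bookkeeping for $K$ — are routine; conceptually the whole scheme is forced by the fact that $\lambda_1^k$ is not a differentiable function of $\ps$, so criticality must take the form of this convexity obstruction rather than a vanishing-derivative condition.
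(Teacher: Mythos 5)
Your proposal follows the same skeleton as the paper's proof --- the first-variation formula of the preceding lemma, the reduction of criticality to indefiniteness of the quadratic form $q_{\delta\ps}$ on the finite-dimensional space $E_1^k(\ps)$, and the identification of criticality with $0$ lying in the convex hull of the $Q_f$'s --- but you execute the separation step differently. You separate inside the finite-dimensional span $W$ of the $Q_f$'s (compact convex hull, finite-dimensional separating hyperplane, surjectivity of $\delta\ps\mapsto(Q\mapsto\int_\pol Q\,\delta\ps\,dx)$ onto $W^*$), whereas the paper separates in $\mL^2(\pol)$ via Hahn--Banach and Riesz, obtains a $\beta$ that need not vanish on $\del\pol$, and then tests criticality against $\rho_\epsilon\beta$, extracts a convergent subsequence of unit eigenfunctions $f_\epsilon$ in the finite-dimensional eigenspace, and passes to the limit by bounded convergence. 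Your route is more elementary, your Rellich--Kato identification of the one-sided derivatives with the extreme eigenvalues of $q_{\delta\ps}$ is cleaner than the paper's appeal to analytic branches, and you address both implications of the ``iff'', whereas the paper only proves that criticality forces $0\in\mK$.

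The one place you are thinner than the paper is precisely what its cutoff argument is built to avoid: the identity $q_{\delta\ps}(f)=-\int_\pol Q_f\,\delta\ps\,dx$ was established only for $\delta\ps$ with $\delta\ps$ and $d\delta\ps$ vanishing on $\del\pol$, and you invoke it for general $\delta\ps\in\mC^\infty(\bar{\pol})$; your sketch covers only $k\ne0$ and omits the invariant case $k=0$, which is the one needed for Theorem \ref{alld}. For the separation direction this is easily sidestepped: surjectivity onto $W^*$ already holds using $\delta\ps$ compactly supported in $\pol$, to which the lemma applies verbatim. For your converse direction (where criticality is tested against arbitrary $\delta\ps$) one genuinely needs the boundary terms to vanish; they do for every $k$, since $(\hess\ps)^{-1}\nu\equiv0$ along the facet with normal $\nu$, so $(\hess\ps)^{-1}\del f$ is tangent to each facet and both integrations by parts produce vanishing boundary integrals (the $k$-terms vanish because $f$ and $d|f|^2$ vanish on $\del\pol$) --- this should be said explicitly. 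Your caveat about strict inequalities (affine $\delta\ps$ gives $q_{\delta\ps}\equiv0$) is a real looseness, but it lies in the paper's definition, and the paper does not prove that direction at all.
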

Again this lemma has an analogous counterpart in the classical critical first eigenvalue problem and it is a manifestation of a more general phenomenon which is treated in \cite{mb}. To use Macbeth's results in our setting, we would need to prove that the measure described in the main theorem there is of a special type  (the relevant fact being that $E_1^k(\ps)$ is finite dimensional). We have chosen to derive the results so as to be self-contained.
\begin{proof}
The condition that $\ps$ is critical can be rewritten as 
$$
\ps \, \mbox{ is critical} \,  \iff \forall \delta \ps\in \mC^\infty(\bar{\pol}),\, \exists \,f,h\in E_1^k(\ps): d\lambda_1^k(f,\delta \ps)<0<d\lambda_1^k(h,\delta \ps).
$$
Now fix $\delta \ps\in \mC^\infty(\bar{\pol})$ and consider $d\lambda_1^k(.,\delta \ps)$ as a function on the finite dimensional vector space $E_1^k(\ps)$. By restriction to the sphere in $E_1^k(\ps)$ with respect to the $\mL^2$ norm we see that
$$
\ps \, \mbox{ is critical} \,  \implies \forall \delta \ps\in \mC^\infty(\bar{\pol}),\, \exists \,f\in E_1^k(\ps),\,\int_P|f|^2dx=1: d\lambda_1^k(f,\delta \ps)=0.
$$
The relevant thing to note is that multiplying $f$ by a fixed constant changes $d\lambda_1^k(f,\delta \ps)$ by multiplication by a positive constant. Now assume that $\delta \ps$ and its derivatives vanish along $\del \pol$ so that from the previous lemma
$$
d\lambda_1^k(f,\delta \ps)=\int_\pol \left(-\re\left(\frac{\del^2 \left(\ps^{il}f_l \ps^{jr}\bar{f}_r\right)}{\del x_i \del x_j}\right)+k^t\hess |f|^2k\right)\delta \ps dx.
$$
We set 
$$
Q_{\ps}(f)=-\re\left(\frac{\del^2 \left(\ps^{il}f_l \ps^{jr}\bar{f}_r\right)}{\del x_i \del x_j}\right)+k^t\hess |f|^2k,
$$
so that
$$
d\lambda_1^k(f,\delta \ps)=\int_\pol Q_{\ps}(f)\delta \ps dx.
$$
If $\ps$ is critical
$$
\forall \delta \ps\in \mC^\infty(\bar{\pol}),\delta \ps,d(\delta\ps)=0 \,\mbox{on}\, \del \pol,\, \exists \,f\in E_1^k(\ps): \int_\pol |f|^2=1,\, \int_\pol Q_{\ps}(f)\delta \ps=0 .
$$
We want to prove that $0$ is the convex hull generated by $\{Q_\ps(f), f\in E_1^k(\ps)\}$. Let $\mK$ be this convex hull. Suppose $0\notin \mK$. By the Hahn-Banach separation theorem applied in $\mL_k^2(M)$ (the $\mL^2$ completion of the space of $k$-equivariant functions on $M$ which we are identifying with a subspace of $\mL^2(\pol)$) there is $\mu$, a linear bounded functional on $\mL_k^2(M)$ such that $\mu_{|\mK}>0$. By Riesz's representation theorem there is $\beta\in \mL^2(\pol)$ such that 
$$
\mu(h)=\int_\pol\beta hdx>0, \, \forall h\in \mK.
$$
Suppose that $\beta$ and its first order derivatives vanish on $\del \pol$. Then because $\ps$ is critical there is $f\in E_1^k(\ps)$ with $\mL^2$-norm equal to 1 such that
$$
\int_\pol Q_\ps(f)\beta=0,
$$
but by assumption $\int_\pol Q_\ps(f)\beta=\mu(Q_\ps(f))>0$ because $Q_\ps(f)\in \mK$ and we get a contradiction. But since $\beta$ (or its first order derivatives) may not vanish on $\del \pol$, we need a slight modification of the above argument. Consider the smooth bump function $\rho_\epsilon$ which is identically equal to $1$ on $\pol\setminus \mV_\epsilon (\del \pol)$ where $\mV_\epsilon (\del \pol)$ denotes a tubular neighbourhood of radius $\epsilon$ of $\del\pol$. Let $\beta_\epsilon$ denote $\rho_\epsilon\beta$. Then because $\ps$ is critical, there is $f_\epsilon \in E_1^k(\ps)$ with $\mL^2$-norm equal to 1 such that
$$
\int_\pol Q_\ps(f_\epsilon)\beta_\epsilon=0,
$$
and $\int_\pol |f_\epsilon|^2=1.$ Now $\{f_\epsilon\}$ is bounded and contained in a finite dimensional space so that it admits a convergent subsequence. Let $f\in E_1^k(\ps)$ be the limit. Because the subsequence converges in that finite dimensional subspace, $Q_\ps(f_\epsilon)$ converges to $Q_\ps(f)$ in the same subsequence. The sequence $\beta_\epsilon$ also converges a.e. to $\beta$ so that $Q_\ps(f_\epsilon)\beta_\epsilon$ has a subsequence that converges a.e. to $Q_\ps(f)\beta$. On the other hand for that subsequence $|Q_\ps(f_\epsilon)\beta_\epsilon|\leq C \beta$ for some constant $C$. This is because in the subsequence there is bound on the $\mL^\infty$-norm of $Q_\ps(f_\epsilon)$. By the bounded convergence theorem 
$$
\int_\pol Q_\ps(f_\epsilon)\beta_\epsilon\ra \int_\pol Q_\ps(f)\beta=0.
$$
But $\int_\pol Q_\ps(f)\beta=\mu(Q_\ps(f))>0$ because $Q_\ps(f)\in \mK$ and we get a contradiction. We conclude that $0\in \mK$ and the proposition follows.
\end{proof}
\section{Proof the of main Theorems \ref{s2}, \ref{alld}}\label{proof}
The idea is to exploit the characterisation given in Proposition \ref{characterisation_crit} for critical toric K\"ahler metrics to conclude that such metrics do not exist.
\subsection{The proof of theorem \ref{s2}}
\begin{proof}
Let $k\in \bZ$ be fixed. Under the right normalisation, $(S^2,\omega_{FS},S^1)$ is a toric symplectic manifold with moment polytope $]-1,1[$. Any $S^1$-invariant metric on $S^2$ is described by a symplectic potencial $\ps\in \spot(]-1,1[)$. From Proposition \ref{characterisation_crit} if it is critical for $\lambda_1^k$ then there are function $\{f_0,\cdots, f_N\}$ and $\alpha_0,\cdots a_N\in [0,1]$ satisfying
\begin{equation}\label{lap_S2}
\left(\frac{f_a'}{\ps''}\right)'=\left(\newb{-}\lambda+k^2\ps''\right)f_a
\end{equation}
and 
$$
\sum_{a=0}^N\alpha_a\left( \left|\frac{f_a'}{\ps''}\right|^2-k^2|f_a|^2\right)''=0.
$$
As the $\alpha_a$ are all positive (and smaller than $1$) they can be absorbed into the $f_a$'s at the cost of loosing the normalisation for $\int_P |f_a|^2dx$'s. We write 
$$
\sum_{a=0}^N \left( \left|\frac{f_a'}{\ps''}\right|^2-k^2|f_a|^2\right)''=0.
$$ Now
$$
\sum_{a=0}^N\left( \left|\frac{f_a'}{\ps''}\right|^2-k^2|f|^2\right)'=2\re\left( \left(\frac{f_a'}{\ps''}\right)' \frac{\bar{f}_a'}{\ps''}-k^2f_a'\bar{f_a}\right)
$$
and replacing in Equality (\ref{lap_S2}) we see that 
$$
\sum_{a=0}^N\left( \left|\frac{f_a'}{\ps''}\right|^2-k^2|f|^2\right)'=2\sum_{a=0}^N\re\left( \left(\newb{-}\lambda+k^2\ps''\right)f_a \frac{\bar{f}_a'}{\ps''}-k^2f_a'\bar{f_a}\right)
$$
$$
\sum_{a=0}^N\left( \left|\frac{f_a'}{\ps''}\right|^2-k^2|f|^2\right)'=\newb{-}2\lambda \sum_{a=0}^N\frac{\re(f_a\bar{f}_a')}{\ps''}.
$$
This then implies that 
$$
\sum_{a=0}^N\frac{\re(f_a\bar{f}_a')}{\ps''}
$$
is constant. Because $\frac{1}{\ps''}$ vanishes at $1$ and $-1$, this is actually zero and $\sum_{a=0}^N{\re(f_a\bar{f}_a')}=0$ so that $\sum_{a=0}^N|f_a|^2$ is constant. We look at two cases separately:
\begin{itemize}
\item In the case where $k\ne 0$, the $f_a$ all vanish at $1$ and $-1$ and so 
$\sum_{a=0}^N |f_a|^2=0$ so that $f_a=0$ for all $a$; a contradiction. 
\item In the case when $k=0$ we may assume that the $f_a$ are real. We have
$$
\sum_{a=0}^N \left( \left(\frac{f_a'}{\ps''}\right)^2\right)''=2\sum_{a=0}^N \left( \left(\frac{f_a'}{\ps''}\right)''\frac{f_a'}{\ps''}+\left(\left(\frac{f_a'}{\ps''}\right)'\right)^2\right)=0
$$
and replacing Equality (\ref{lap_S2}) for $k=0$ again we find that 
$$
\begin{aligned}
0&=&2\sum_{a=0}^N \left((\newb{-}\lambda f_a)'\frac{f_a'}{\ps''}+\left(\left(\frac{f_a'}{\ps''}\right)'\right)^2\right)\\
&=&2\sum_{a=0}^N \newb{-}\lambda\frac{(f_a')^2}{\ps''}+\lambda^2 f_a^2,\\
\end{aligned}
$$
and $\sum_{a=0}^N \frac{(f_a')^2}{\ps''}= \lambda \sum_{a=0}^N f_a^2$ and hence it is constant. But, because $\frac{1}{s''}$ vanishes at $0$, $\sum_{a=0}^N \frac{(f_a')^2}{\ps''}=0$ and each $f_a'$ vanishes which is also a contradiction. 
\end{itemize}
\end{proof}
\subsection{Proof of theorem \ref{alld}}
We start with a useful calculation.
\begin{lemma}
In the same context as above, let $f$ be an invariant eigenfunction for the eigenvalue $\lambda$ of the Laplacian on toric K\"ahler manifold with symplectic potential $\ps$ then
\begin{equation}\label{expande_dlambda}
\frac{\del^2 \left(\ps^{il}f_{l} \ps^{jr}f_{r}\right)}{\del x_i \del x_j}=\lambda^2  f^2+2\lambda\del f^t(\hess \ps)^{-1}\del f+\tr(D((\hess \ps)^{-1}\del f))^2
\end{equation}
\end{lemma}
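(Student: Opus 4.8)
The identity \eqref{expande_dlambda} is pointwise on $\pol$, and I would establish it by a direct twofold differentiation, the only analytic input being the eigenvalue equation itself. Since $(x,\theta)$ are Darboux coordinates the invariant eigenvalue equation $\Lap f=\lambda f$ reads simply $\sum_i\del_i(\ps^{il}f_l)=-\lambda f$ on $\pol$. I would abbreviate $V^i:=\ps^{il}f_l$, so that $V=(\hess\ps)^{-1}\del f$ is a smooth vector field on $\pol$; then the left-hand side of \eqref{expande_dlambda} is $\sum_{i,j}\del_i\del_j(V^iV^j)$, one has $\del f^t(\hess\ps)^{-1}\del f=\sum_i f_iV^i$, and $\tr\big(D((\hess\ps)^{-1}\del f)\big)^2=\tr\big((DV)^2\big)=\sum_{i,j}(\del_jV^i)(\del_iV^j)$, while the eigenvalue equation becomes the divergence relation $\sum_i\del_iV^i=-\lambda f$.

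The one substantive step is to differentiate this divergence relation once: applying $\del_j$ and using that mixed partial derivatives commute gives $\sum_i\del_i\del_jV^i=\del_j\!\big(\sum_i\del_iV^i\big)=-\lambda f_j$. This converts a term that is a priori second order in $V$ into something involving only $\del f$ and $\lambda$. Then I would expand by the Leibniz rule, $\sum_j\del_j(V^iV^j)=\sum_j(\del_jV^i)V^j+V^i\sum_j\del_jV^j=\sum_j(\del_jV^i)V^j-\lambda fV^i$, and apply $\sum_i\del_i$. This produces four terms: $\sum_{i,j}(\del_i\del_jV^i)V^j$, which by the previous identity equals $-\lambda\sum_j f_jV^j=-\lambda\,\del f^t(\hess\ps)^{-1}\del f$; the term $\sum_{i,j}(\del_jV^i)(\del_iV^j)=\tr\big((DV)^2\big)$; the term $-\lambda\big(\sum_i\del_iV^i\big)f=\lambda^2 f^2$; and the term $-\lambda\sum_i V^if_i=-\lambda\,\del f^t(\hess\ps)^{-1}\del f$. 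Collecting these four contributions (the first and the last being proportional to $\del f^t(\hess\ps)^{-1}\del f$ and combining into the cross term of \eqref{expande_dlambda}) yields the asserted formula.

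There is no real conceptual obstacle here; essentially all of the content is the commuting-partials trick of the second paragraph, and the rest is bookkeeping. The two points that demand care are (i) keeping the sign convention for the eigenvalue equation ($\Lap f=-\del_i(\ps^{il}f_l)$) consistent throughout, since that convention fixes the sign of the $2\lambda\,\del f^t(\hess\ps)^{-1}\del f$ term; and (ii) correctly identifying the relevant quadratic expression in $DV$ as $\tr\big((DV)^2\big)=\sum_{i,j}(\del_jV^i)(\del_iV^j)$ rather than the Hilbert--Schmidt norm $\sum_{i,j}(\del_jV^i)^2$, these being different whenever $DV$ is not symmetric. One should also note that no boundary considerations enter: \eqref{expande_dlambda} is an identity of smooth functions on the open polytope $\pol$, valid by interior elliptic regularity of $f$.
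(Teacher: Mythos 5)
Your proof is correct and follows essentially the same route as the paper's: expand $\del_i\del_j(V^iV^j)$ for $V=(\hess\ps)^{-1}\del f$ by the Leibniz rule, substitute the eigenvalue relation $\del_iV^i=-\lambda f$ and its first derivative, and identify the remaining quadratic term as $\tr\left((DV)^2\right)$. Note that your computation (like the paper's own proof and its later use in Equation (\ref{main_crit_relation})) actually produces $-2\lambda\,\del f^t(\hess\ps)^{-1}\del f$ for the cross term under the convention $\del_i(\ps^{il}f_l)=-\lambda f$, so the $+$ sign printed in the lemma statement is a sign slip of the paper rather than a flaw in your argument.
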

\begin{proof}
\begin{IEEEeqnarray*}{l}
\frac{\del^2 \left(\ps^{il}f_{l} \ps^{jr}f_{r}\right)}{\del x_i \del x_j}=\frac{\del \left(\ps^{il}f_{l}\right)}{\del x_i}\frac{\del\left( \ps^{jr}f_{r}\right)}{\del x_j}+2\frac{\del^2 \left(\ps^{il}f_{l} \right)}{\del x_i \del x_j}\ps^{jr}f_{r}+
\frac{\del (\ps^{il}f_{l})}{\del x_j}\frac{\del\left( \ps^{jr}f_{r}\right)}{\del x_i} \\
=(\lambda f)( \lambda f)+2\frac{\del (\newb{-} \lambda f)}{ \del x_j}\ps^{jr}f_{r}+\frac{\del \left(\ps^{il}f_{l}\right)}{\del x_j}\frac{\del\left( \ps^{jr}f_{r}\right)}{\del x_i} \\
=\lambda^2 f^2\newb{-}2\lambda\del f^t(\hess \ps)^{-1}\del f+\frac{\del \left(\ps^{il}f_{l}\right)}{\del x_j}\frac{\del\left( \ps^{jr}f_{r}\right)}{\del x_i}. \\
\end{IEEEeqnarray*}
Where we have used the fact that
$$
\frac{\del (\ps^{il}f_{l})}{\del x_j}=\newb{-}\lambda f.
$$
Now $$\frac{\del \left(\ps^{il}f_{l}\right)}{\del x_j}=\left[D\left((\hess \ps)^{-1}\del f\right)\right]_{ij}$$ and the result follows.
\end{proof}
As a result of this calculation and of Proposition \ref{characterisation_crit} it follows that
the symplectic potential $\ps$ is $\lT$-critical iff for all $\delta \ps\in \mC^\infty(\bar{\pol})$ there are functions on $\pol$ $\{f_0,\cdots, f_N\}$ corresponding to invariant eigenfunctions in $E_1^\bT(\ps)$  satisfying
$$
\sum_{a=1}^N  \left(\lambda^2f_a^2\newb{-}2\lambda\partial f_a(\hess \ps)^{-1}\del{f}_a+\tr (D(\hess \ps)^{-1}\del{f}_a)^2)\right)=0.
$$

We are now ready to prove our main theorem.
\begin{proof}
Suppose that there exists a $\lT$-critical metric on a toric K\"ahler manifold. We are going to derive a contradiction from this assumption. Let $\pol$ denote the moment polytope of our toric K\"ahler manifold. Assume without loss of generality that $0$ is a vertex of $\pol$ and that $\pol$ is standard at $0$. We can alway achieve this applying an $SL(n,\bZ)$ transformation which will lift to an equivariant diffeomorphism taking critical symplectic potentials for $\lT$ to taking critical symplectic potentials for $\lT$. 

We start by showing that 
\begin{equation}\label{main_crit_relation}
\sum_{a=1}^N  \left(\lambda^2f_a^2\newb{-}2\lambda\partial f_a(\hess \ps)^{-1}\del{f}_a+\tr (D((\hess \ps)^{-1}\del{f}_a)^2)\right)=0
\end{equation}
implies that $f_a(0)=0, \, \forall a=0,\cdots, N.$ The above relation holds at $x=0$. Now $(\hess \ps)^{-1}(0)=0$ and we are going to show that 
$$
\tr (D(\hess \ps)^{-1}\del{f}_a)^2)(0)=\sum_{a=1}^N|\del f_a|^2(0).
$$
It will then follows that $f_a(0),\partial f_a(0) =0, \, \forall a=0,\cdots, N.$ Because $\ps\in \spot(\pol),$ there is $v\in \mC^\infty (\bar{\pol})$ such that  $\ps=\ps_G+v$ and $\ps_G=\sum_{k=1}^d \left(x\cdot \nu_k-c_k\right)\log(x\cdot \nu_k-c_k)-\left(x\cdot \nu_k-c_k\right)$
where
$$
\pol=\left\{x\in \bR^n: x\cdot \nu_l-c_l>0, \, l=1,\cdots d\right\}.
$$
It is not hard to see that 
$$
\hess \ps_G=\sum_{l=1}^d \frac{\nu_l\nu_l^t}{x\cdot \nu_l-c_l}.
$$
Because $\pol$ is standard at zero $\{\nu_1,\cdots,\nu_n\}$ is the canonical basis of $\bR^n$ so that
$$
\hess \ps_G=
\left( \begin{array}{cccc}
\frac{1}{x_1} & 0& \cdots &0  \\
\hfill & \hfill & \ddots &\hfill  \\
0 & \cdots  & 0 & \frac{1}{x_m} \\
\end{array} \right)+A
$$
where $A$ is smooth on a neighbourhood of $0.$ Hence, on a neighbourhood of $0$, there is a smooth $B$ such that
$$
\hess \ps=\left( \begin{array}{cccc}
\frac{1}{x_1} & 0& \cdots &0  \\
\hfill & \hfill & \ddots &\hfill  \\
0 & \cdots  & 0 & \frac{1}{x_n} \\
\end{array} \right)+B.
$$
So 
\begin{equation}\label{hess_0}
(\hess \ps)^{-1}=\mbox{Diag}(x_1,\cdots, x_n)-\mbox{Diag}(x_1,\cdots, x_n)B\mbox{Diag}(x_1,\cdots, x_n)+\cdots
\end{equation}
and therefore writing $\del_lf=f_l,\, l=1,\cdots n$
$$
(\hess \ps)^{-1}\del f=\left( \begin{array}{c}
{x_1}f_1   \\
\vdots   \\
{x_n}f_n \\
\end{array} \right)+O(2)
$$
where for any positive integer $l$, $O(l)$ denotes a function which vanishes to order at least $l$ at zero i.e. a function which is bounded by $c||x||^l$ on some neighbourhood of zero for some constant $C$. Hence 
$$
D((\hess \ps)^{-1}\del f)=\left( \begin{array}{ccc}
\del_1({x_1} f_1)&  \cdots &x_1f_{1n}  \\
\hfill & \ddots &\hfill  \\
x_ nf_{1n}& \cdots& \del_n({x_n} f_n)\\
\end{array} \right)+O(1)
$$
where $f_{ij}=\frac{\del^2f}{\del x_i\del x_j}$ for all $i,j=1,\cdots n$ and 
$$
\tr(D((\hess \ps)^{-1}\del f))^2=\sum_{l=1}^n(f_l+x_lf_{ll})^2+\sum_{l,r=1, l\ne r}^ nx_lx_rf_{lr}^2+O(1).
$$
In particular $\tr(D((\hess \ps)^{-1}\del f))^2(0)=\sum_{l=1}^n(f_l)^2(0)=|\del f|^2(0)$ as claimed. 

Next we want to prove that if we assume that $f_a=O(l)$ for all $a=0,\cdots, N$ and some integer $l>1$ then in fact $f_a=O(l+1).$ Consider the equality 
$$
\sum_{a=0}^N  \left(\lambda^2f_a^2\newb{-}2\lambda\partial f_a(\hess \ps)^{-1}\del{f}_a+\tr (D(\hess \ps)^{-1}\del{f}_a)^2)\right)=0.
$$ 
\begin{itemize}
\item Because $f_a=O(l)$ it follows that $\lambda^2\sum_{a=0}^N  f_a^2=O(2l).$
\item It follows from Equation (\ref{hess_0}) that $(\hess \ps)^{-1}=O(1)$ and since $\del f_a=O(l-1)$,
$\lambda \sum_{a=0}^N \partial f_a(\hess \ps)^{-1}\del{f}_a=O(2l-1).$
\item As for $\sum_{a=0}^N \tr (D(\hess \ps)^{-1}\del{f}_a)^2)$, to study its asymptotic behaviour near $0$ we essentially need to retrace the steps in the above analysis taking into account that $f_a=O(l)$. If $f=O(l)$ then 
$$
(\hess \ps)^{-1}\del f=\left( \begin{array}{c}
{x_1}f_1   \\
\vdots   \\
{x_n}f_n \\
\end{array} \right)+O(l+1)
$$
$$
D((\hess \ps)^{-1}\del f)=\left( \begin{array}{ccc}
\del_1({x_1} f_1)&  \cdots &x_1f_{1n}  \\
\hfill & \ddots &\hfill  \\
x_ nf_{1n}& \cdots& \del_n({x_n} f_n)\\
\end{array} \right)+O(l),
$$
and
$$
\left( \begin{array}{ccc}
\del_1({x_1} f_1)&  \cdots &x_1f_{1n}  \\
\hfill & \ddots &\hfill  \\
x_ nf_{1n}& \cdots& \del_n({x_n} f_n)\\
\end{array} \right)=O(l-1),
$$
so that 
$$
\begin{aligned}
\tr(D((\hess \ps)^{-1}\del f))^2&=&\sum_{l=1}^n(f_l+x_lf_{ll})^2+\sum_{l,r=1, l\ne r}^ nx_lx_rf_{lr}^2+O(2l-1).\\
\end{aligned}
$$
At this point we may conclude that it follows from Equation (\ref{main_crit_relation}) and the analysis above that 
$$
\sum_{a=0}^N\left( (f_{a,l}+x_lf_{a,ll})^2+\sum_{l,r=1}^ nx_lx_rf_{a,lr}^2\right)=O(2l-1),
$$ 
when in fact this expression only needs to be $O(2l-2)$. Consider the analytic expansion of $f_a$ around $0$. We have
$f_a=P_a+O(l+1)$ where $P_a$ is a homogeneous polynomial of order $l$. Therefore
$$
\sum_{a=0}^N\left( (\del_l(x_lP_{a,l}))^2+\sum_{l,r=1}^ nx_lx_rP_{a,lr}^2\right)
$$ 
must be a polynomial of order $2l-1$. Let $v=(x_1,\cdots,x_n)$ be a generic vector in $\{x=(x_1,\cdots,x_n)\in \bR^n: x_1,\cdots,x_n>0\}$ then
$$
t^{2l-2}\sum_{a=0}^N\left( (\del_l(x_lP_{a,l}))^2(v)+\sum_{l,r=1}^ nx_lx_rP_{a,lr}^2(v)\right)
$$
must be of order at least $2l-1$ in $t$ so that 
$$
\sum_{a=0}^N\left( (\del_l(x_lP_{a,l}))^2(v)+\sum_{l,r=1}^ nx_lx_rP_{a,lr}^2(v)\right)=0
$$
and so because all terms in the sum are non negative they must vanish. We conclude that $\del_l(x_lP_{a,l})\equiv0$ and $P_{a,lr}\equiv0$ so that $P_a$ must be constant for all $a=0,\cdots N$. Because $P_a$ is of degree greater than $1$ then it actually must vanish so that $f_a=O(l+1)$ as claimed.
\end{itemize}
Since we have proved that $f_a=O(1)$ and $f_a=O(k)\implies f_a=O(k+1)$ it follows that all derivatives of $f_a$ vanish at zero for all $a=0,\cdots, N$. At this point we use the analyticity hypothesis. Because our Riemannian metric is analytic, the eigenfunctions for its Laplace operator are analytic as well. This follows from elliptic regularity. We may then conclude that all $f_a\equiv 0$ which is impossible. No critical metric exists.
\end{proof}

\section{Concluding remarks}
We would like to be able to use the equations that we derived from the $\lT$-criticality on the metric and the corresponding eigenfunctions to conclude that both metric and eigenfunctions are analytic. The symplectic potential of a $\lT$-critical metric and its eigenfunction satisfy the following system of PDE's for function on $\pol$
\begin{equation}\label{systemPDE}
\begin{cases}
\frac{\del}{\del x_i}\left(\ps^{ij}\frac{\del f_a}{\del x_j}\right)=\lT f_a,\, \forall a=0,\cdots, N\\
\sum_{a=0}^N  \frac{\del^2\left(\ps^{il}f_{a,l}\ps^{jr}f_{a,r}\right)}{\del x_i\del x_j}=0.
\end{cases}
\end{equation}
This can be written in the form $F(x,\ps,f, \del \ps, \del f,\cdots)=0$ for an analytic function $F$ (here we write $f=(f_0,\cdots,f_N)$). It would follow from a result of Morrey (see \cite{m}) that if this system is elliptic in some suitable sense then its solutions are analytic. In fact the system is not elliptic. We will prove this here for the sake of completeness.
\begin{lemma}
The system (\ref{systemPDE}) is nowhere elliptic.
\end{lemma}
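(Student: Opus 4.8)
The plan is to linearise the system \eqref{systemPDE} at a solution and show that its principal symbol --- taken in the Douglis--Nirenberg sense, with $\lambda=\lT$ regarded as a fixed parameter --- is a singular matrix for every covector $\xi$ and at every point of $\pol$; this is exactly the statement that the system is nowhere elliptic. First I would fix the weights: each eigenfunction $f_a$ gets weight $0$, the symplectic potential $\ps$ gets weight $1$, the eigenvalue equations $E_a:=\del_i(\ps^{ij}\del_j f_a)+\lambda f_a$ get weight $2$ (they are second order in $f_a$ and, since $\ps^{ij}$ is the inverse Hessian, third order in $\ps$), and the last equation $E_\star:=\sum_{a=0}^N\del_i\del_j\bigl(\ps^{il}f_{a,l}\,\ps^{jr}f_{a,r}\bigr)$ gets weight $3$ (third order in the $f_a$, fourth order in $\ps$). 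One checks these weights are admissible, so the principal symbol is the genuine top-order symbol; it is an $(N+2)\times(N+2)$ matrix with rows indexed by the $N+2$ equations and columns by the variations $\delta f_0,\dots,\delta f_N,\delta\ps$, and the $E_a$-row carries $\ps^{ij}\xi_i\xi_j$ in the $\delta f_a$-slot and zeros in the $\delta f_b$-slots for $b\neq a$.

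The heart of the argument is that the $E_\star$-row is a $\xi$-polynomial combination of the $E_a$-rows, so the symbol matrix is never invertible. Writing $u_a:=(\hess\ps)^{-1}\del f_a$, the Leibniz rule alone gives $\del_i\del_j(u_a^iu_a^j)=(\operatorname{div}u_a)^2+2\,u_a\cdot\del(\operatorname{div}u_a)+\tr\bigl((Du_a)^2\bigr)$, and the only third-order-in-$f_a$ term is the middle one. Substituting the eigenvalue equation $\operatorname{div}u_a=E_a-\lambda f_a$ in that term yields
\[
E_\star=\sum_{a=0}^{N}2\,u_a\cdot\del E_a\;+\;\sum_{a=0}^{N}\Bigl((\operatorname{div}u_a)^2-2\lambda\,u_a\cdot\del f_a+\tr\bigl((Du_a)^2\bigr)\Bigr),
\]
and the parenthesised remainder is a polynomial in derivatives of the $f_a$ of order $\le 2$ and of $\ps$ of order $\le 3$ --- on solutions it is precisely the right-hand side of \eqref{expande_dlambda} summed over $a$ --- hence strictly below $E_\star$'s weight-$3$ budget and invisible to the principal symbol. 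Since $2\,u_a\cdot\del$ is a first-order operator with symbol proportional to $\xi\cdot u_a$, and since linearising at a solution makes $E_a$ and all its derivatives vanish there, the $E_\star$-row of the principal symbol equals $\sum_a c_a(\xi)$ times the $E_a$-row, with $c_a(\xi)$ proportional to $\xi\cdot u_a$. Thus the determinant of the principal symbol vanishes identically in $\xi$ and in the base point, and the system is nowhere elliptic. (Alternatively one writes out the whole matrix and reads off the same linear relation; one finds, for instance, that the $\delta\ps$-entry of $E_\star$ is a constant multiple of $(\ps^{kl}\xi_k\xi_l)\sum_a(\xi\cdot u_a)^2$.)

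The only genuine work here is the symbol bookkeeping --- verifying admissibility of the weights and, above all, confirming that the remainder in the displayed identity really lies strictly below $E_\star$'s weight budget --- and this last fact is essentially already recorded in the lemma yielding \eqref{expande_dlambda} (and is what underlies \eqref{main_crit_relation}); I expect it to be routine. Note that positivity of $\hess\ps$ plays no role, since the symbol determinant vanishes for \emph{every} $\xi$ rather than merely for some; in particular the conclusion is dimension-free and independent of the boundary behaviour of $\ps$, which is precisely the reason one cannot invoke Morrey's regularity theorem to remove the analyticity hypothesis of Theorem \ref{alld}.
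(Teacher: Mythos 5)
Your proposal is correct and follows essentially the same route as the paper: both linearize the system, compute the principal symbol of the linearization, and show its determinant vanishes identically in $\xi$ because the row coming from the criticality equation is the combination $\sum_a 2(\xi\cdot u_a)$ times the rows of the eigenvalue equations --- exactly the linear dependence the paper exhibits after factoring out $D^t(\hess\ps)^{-1}D$ from its table of symbols $L_{ab}(x,D)$. Your derivation of that dependence via the identity $\del_i\del_j(u_a^iu_a^j)=(\operatorname{div}u_a)^2+2\,u_a\cdot\del(\operatorname{div}u_a)+\tr((Du_a)^2)$, with the eigenvalue equation substituted in the only top-weight term, is simply a tidier bookkeeping of the paper's entry-by-entry computation.
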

\begin{proof}
This is essentially a matter of chasing through the definition of ellipticity. See \cite{m} for more details. Writing $F=(F_0,\cdots, F_N, F_{N+1})$ with
$$
\begin{cases}
F_a=\frac{\del}{x_i}\left(\ps^{ij}\frac{\del f_a}{\del x_j}\right)-\lT f_a,\, \forall a=0,\cdots, N\\
F_{N+1}=\sum_{a=0}^N  \frac{\del^2\left(\ps^{il}f_{a,l}\ps^{jr}f_{a,r}\right)}{\del x_i\del x_j},
\end{cases}
$$
we essentially want to calculate $\det DF$. We start by calculating each partial derivative. We set $f_{N+1}=\ps$ and below we will omit the dependence of $F$ on variables that are fixed.
\begin{enumerate}
\item Given $a=0,\cdots, N$ 
$$
\frac{d }{dt}_{|t=0}F_a(f_a+tv)=\frac{\del}{x_i}\left(\ps^{ij}\frac{\del v}{\del x_j}\right)-\lT v,
$$
so that 
$$
L_{aa}(x,D)=D_i\ps^{ij}D_j=D^t(\hess \ps)^{-1}D,
$$
where we have used the notation in \cite{m}. 
\item Also given $a,b<N+1$ distinct
$$
\frac{d }{dt}_{|t=0}F_a(f_b+tv)=0, \, a\ne b,
$$
so that 
$$
L_{ab}(x,D)=0, \, a\ne b.
$$
\item Now given $a<N+1$ the derivative of $F_a$ with respect to $\ps$ is given by
$$
\frac{d }{dt}_{|t=0}F_a(\ps+tv)=-\frac{\del}{\del x_i}\left(\ps^{il}v_{lr}\ps^{rj}\frac{\del f_a}{\del x_j}\right),
$$
and 
$$
L_{aN+1}(x,D)=-D_i\ps^{il}D_lD_r\ps^{rj}\frac{\del f_a}{\del x_j}=-D^t(\hess \ps)^{-1}DD^t(\hess \ps)^{-1}\del f_a.
$$
\item As for the derivative of $F_{N+1}$  with respect to $f_a$ for $a<N+1$
$$
\frac{d }{dt}_{|t=0}F_{N+1}(f_a+tv)=  2\frac{\del^2 \left(\ps^{il}f_{a,l}\ps^{jr}v_{r}\right)
}{\del x_i\del x_j},
$$
and
$$
\begin{aligned}
L_{N+1,a}(x,D)&=&  2D_iD_j\ps^{il}f_{a,l}\ps^{jr}D_r\\
&=&2D^t(\hess \ps)^{-1}DD^t(\hess \ps)^{-1}\del f_a.
\end{aligned}
$$
\item Last, we calculate the derivative of $F_{N+1}$ with respect to $\ps$
$$
\frac{d }{dt}_{|t=0}F_{N+1}(\ps+tv)=-2\sum_{a=1}^N  \frac{\del^2\left(\ps^{iq}v_{qp}\ps^{pl}f_{a,l}\ps^{jr}f_{a,r}\right)}{\del x_i\del x_j},
$$
and 
$$
\begin{aligned}
L_{N+1N+1}(x,D)&=&-2\sum_{a=0}^N D_iD_j\ps^{iq}D_qD_p\ps^{pl}f_{a,l}\ps^{jr}f_{a,r}\\
&=&-2D^t(\hess\ps)^{-1}D \sum_{a=0}^N(D^t(\hess\ps)^{-1}\del f_a)^2.
\end{aligned}
$$
\end{enumerate}
To sum up
\begin{center}
    \begin{tabular}{|l|l|l| }
    \hline
    &$\frac{d }{dt}_{|t=0}F_a(f_b+tv)$ & $L_{ab}(x,D)$  \\ \hline
    \hline
    $a=b<N+1$&$\frac{\del}{x_i}\left(\ps^{ij}\frac{\del v}{\del x_j}\right)-\lT v$ & $D^t(\hess \ps)^{-1}D$  \\ \hline
    $a,b<N+1, a\ne b$&$0$ & $0$  \\ \hline
    $a<N+1, b=N+1$ &$-2\sum_{a=1}^N  \frac{\del^2 \left(\ps^{il}f_{a,l}\ps^{jr}v_r\right)}{\del x_i\del x_j}$
 & $-2D^t(\hess \ps)^{-1}DD^t(\hess \ps)^{-1}\del f_a$ \\ \hline
    $a=b=N+1$&$-2\sum_{a=1}^N  \frac{\del^2\left(\ps^{iq}v_{qp}\ps^{pl}f_{a,l}\ps^{jr}f_{a,r}\right)}{\del x_i\del x_j}$ & $-2D^t(\hess\ps)^{-1}D \sum_{a=0}^N(D^t(\hess\ps)^{-1}\del f_a)^2$ \\
    \hline
    \end{tabular}
\end{center}
The system is elliptic iff
$$
\det DF:=\det(L_{ij}(x,D))_{i,j=0}^{N+1}\ne 0,\, \forall D\ne 0
$$
Now $DF$ is given by $D^t (\hess \ps)^{-1}D$ times
$$
\left( \begin{array}{cccc}
1&0&  \cdots & -D^t(\hess \ps)^{-1}\del f_0\\
\hfill & \ddots &\hfill &\hfill \\
0& \cdots& 1&-D^t(\hess \ps)^{-1}\del f_N\\
2D^t(\hess \ps)^{-1}\del f_0&\cdots&2D^t(\hess \ps)^{-1}\del f_N&-2\sum_{a=0}^N(D^t(\hess\ps)^{-1}\del f_a)^2\\
\end{array} \right)
$$
The matrix above is clearly singular at all points as its last line is a linear combination of the previous $N$ lines. 
\end{proof}

\end{document}